\theoremstyle{plain} 
\newtheorem{theorem}{Theorem}[section]
\newtheorem{corollary}[theorem]{Corollary}
\newtheorem{lemma}[theorem]{Lemma}
\newtheorem{definition}[theorem]{Definition}
\numberwithin{equation}{section}
\newcommand{\al}{\alpha}
\newcommand{\be}{\beta}
\newcommand{\bq}{\mathbf {Q}}
\newcommand{\bz}{\mathbf {Z}}
\newcommand{\ot}{\otimes}
\newcommand{\lra}{\longrightarrow}
\newcommand{\lla}{\longleftarrow}
\begin{document}
\title{Hochschild Cochains as a Frobenius Algebra}
\author{Jerry M. Lodder}
\date{}
\maketitle

\noindent
{\bf{Abstract.}} 
We construct a Frobenius algebra structure on the Hochschild cochains
of a group ring $k[G]$ that extends the known structure of a $\langle
1, \ 2 \rangle$ topological quantum field theory on $HH^0(k[G]; \,
k[G])$, $k$ a field and $G$ a finite group.  The convolution product
extends to the homotopy commutative Gerstenhaber product on cochains,
the Frobenius coproduct extends to a coproduct on the chain complex for
Hochschild homology, and there is a pairing $\langle \
,\ \rangle$ on Hochschild cochains satisfying 
$\langle \al \cdot \be , \ \gamma \rangle = \langle \al , \ \be \cdot
\gamma \rangle$.  The pairing, however, degenerates on a certain
subcomplex of the Hochschild cochains.

\smallskip
\noindent 
MSC Classification:  16E40, 81T40, 81T45.

\smallskip
\noindent
Key Words:  Frobenius Algebras, Hochschild Cohomology,
Topological Quantum Field Theory.

\section{Introduction}

In this paper we investigate how Hochschild cochains become a
differential graded homotopy commutative Frobenius algebra and thus a
target for a $\langle 1, \, 2 \rangle$ topological quantum field
theory \cite{Freed} from the category of oriented compact
one-manifolds with cobordisms as morphisms to the category of (co)chain
complexes and (co)chain maps (up to chain homotopy).  Since the direct
sum of the Hochschild cochains (over all dimensions) remains an
infinite dimensional vector space, the finite dimensionality axiom of a
TQFT is not satisfied.  
Specifically, let $k$ be a field and $G$ a finite group.  The
Hochschild cochains on $k[G]$ carry a product structure given by the
Gerstenhaber product, defined on Hochschild's original cochain complex
$$ \big( {\rm{Hom}}_k( k[G]^{\ot *}, \ k[G]), \ \delta \big).  $$
By the work of Tradler and Zeinalian \cite{TZ} and its generalizations 
\cite{Kaufmann, KS, WW}, the cochain complex 
${\rm{Hom}}_k( k[G]^{\ot *}, \ k[G])$ carries the structure of a PROP
over the (chains on) cyclic Sullivan diagrams.  This, however, does
not induce a $\langle 1, \, 2 \rangle$ TQFT on $HH^*(k[G]; \, k[G])$,
since a counit and pairing are missing.  Again, because 
$HH^*(k[G]; \, k[G])$ is not a finite dimensional vector space (summed
over all dimensions), a strict $\langle 1, \, 2 \rangle$ TQFT on 
$HH^*(k[G]; \, k[G])$ is not  possible.

The goal is this paper is to work instead with the $b^*$ cochain complex
$$  \big( {\rm{Hom}}_k( k[G]^{\ot (*+1)}, \ k), \ b^* \big).  $$ 
A duality pairing $k[G] \ot k[G] \to k$ for group rings induces a
cochain map from ${\rm{Hom}}_k( k[G]^{\ot *}, \ k[G])$ to
${\rm{Hom}}_k( k[G]^{\ot (*+1)}, \ k)$.  On the $b^*$ cochain complex,
the Gerstenhaber product becomes an extension of the convolution
product known to exist on the strict $\langle 1, \, 2 \rangle$ TQFT
offered by $HH^0(k[G]; \, k[G])$
\cite{Teleman}.  The main point of using the $b^*$ cochain complex, however,
is that there is a ``Gerstenhaber coproduct'' on the chain complex for
Hochschild homology, computed via the $b$-boundary map.  Let $C_n =
k[G]^{\ot (n+1)}$ and 
$C_* = \big( \sum_{n \geq 0}C_n, \ b \big)$ be the chain complex
for Hochschild homology.  We construct an explicit chain map
$$   T : C_* \to C_* \ot C_*  $$
that is dual to the Gerstenhaber product
$$  m: W_* \ot W_* \to W_*, \ \ \ m(\al \ot \be) := \al
\underset{G}{\cdot} \be,  $$
where $W_n = {\rm{Hom}}_k(C_n, \ k)$.  Moreover, we define a pairing
$\langle \ \, \ \rangle : W_* \ot W_* \to k$ so that
$$  \langle \al \underset{G}{\cdot} \be , \ \gamma \rangle = 
\langle \al , \ \be \underset{G}{\cdot} \gamma \rangle, $$
which establishes that $W_*$ is a Frobenius algebra,
commutative up to homotopy.  The pairing, however, fails to be
non-degenerate, and we establish a subcomplex on which $\langle \, ,
\, \rangle$ degenerates, namely 
$$  V_p = \{ \al \in W_p \ | \ \al (h, \, N, \, \ldots \, , \, N) = 0
\ \forall h \in G \}, \ p \geq 0,  $$
where $N = \sum_{g \in G} g$ is the norm element of $k[G]$.  

Behind the Hochschild homology of the group ring $k[G]$ is the
simplicial set $N^{\rm{cy}}_*(G)$, the cyclic bar construction of $G$
\cite[7.3.10]{cyclic-hom}, with geometric realization
$$  |N^{\rm{cy}}_*(G)| \simeq {\rm{Maps}}(S^1, \ BG),  $$
where $S^1$ denotes the unit circle and $BG$ is the classifying space
of $G$.  Recall that $N^{\rm{cy}}_n(G) = G^{n+1}$ and there is a
subsimplicial set of $N^{\rm{cy}}_*(G)$ given by
$$  N^{\rm{cy}}_n(G,\, e) = \{ (g_0, \, g_1, \, \ldots \, , \, g_n)
\in G^{n+1} \ | \ g_0g_1 \ldots g_n = e \}  $$ 
with $|N^{\rm{cy}}_*(G, \, e)| \simeq BG$, which realizes constant
maps of $S^1$ into $BG$ (the $S^1$-fixed points of ${\rm{Maps}}(S^1, \
BG)$).  Within the Hochschild complex
$W_n = {\rm{Hom}}_k(k[G]^{\ot (n+1)}, \, k)$ we define cochains
supported on $BG$ as elements $\al \in W_n$ with 
$\al(g_0, \, g_1, \, \ldots \, , \, g_n) = 0$ whenever $g_0g_1 \ldots
g_n \neq e$.  These cochains form a differential graded subcomplex of
$W_*$, closed under the Gerstenhaber product, and thus a subalgebra.
Moreover, for cochains supported on $BG$, the Gerstenhaber product
agrees with the simplicial cup product, and Gerstenhaber's pre-Lie
product agrees with Steenrod's cup-one product. This establishes that the
cochain complex for group cohomology, under the simplicial cup product,
is a homotopy commutative subalgebra of $W_*$ under the Gerstenhaber
product.  

Recall that $b: k[G]^{\ot 2} \to k[G]$ is given by the commutator
$b(g_0, \, g_1) = g_0g_1 - g_1g_0$, and
$$  H = {\rm{Ker}} \, b^* : {\rm{Hom}}_k(k[G], \ k) \to
{\rm{Hom}}_k( k[G]^{\ot 2}, \ k)  $$
carries the structure of a $\langle 1, \ 2 \rangle$ TQFT, at least
when $G$ is finite and $k$ a field \cite{Teleman}.  For $\al$, $\be
\in H$, the product is given by the convolution product $\al * 
\be$, 
$$  (\al * \be)(g_0) = \sum_{h \in G} \al (h) \be(g_0 h^{-1}).  $$
The coproduct $T: H \to H \ot H$ is induced by
$$  T(g_0^*) =  \sum_{h \in G} h^* \ot (g_0h^{-1})^* , $$
where $g^*$ is the element of ${\rm{Hom}}_k(k[G],\ k)$ dual to $g
\in G$.  Using $V = HH_0(k[G]; \, k[G])$ and $V^* \simeq H$, 
the coproduct $T : V \to V \ot V$ can also be written as
$$  T(g_0) = \sum_{h \in G} h \ot g_0 h^{-1}.  $$
The duality pairing $\langle \ , \ \rangle : H \ot H \to k$ is 
given by
$$  \langle \al ,\, \be \rangle = \sum_{h \in G} \al(h)
\be(h^{-1}).  $$
We show how each of the above, the convolution product, the coproduct
and the pairing can be extended to the full Hochschild cochain
complex.  Although the resulting Frobenius algebra structure is stated
for finite groups, many constructions work for infinite groups and we
work with an arbitrary discrete group where possible.

\section{Hochschild Cohomology}

Let $k$ be a unital, commutative and associative coefficient ring and
let $A$ be an associative algebra over $k$.  Recall that Hochschild's
original definition \cite{Hochschild1944, Hochschild1945} for 
$HH^*(A; \, A)$, the Hochschild cohomology of $A$ with coefficients in
the bimodule $A$ is given as the homology of the cochain complex:  
\begin{align*} 
& {\rm{Hom}}_k(k, \, A) \overset{\delta}{\lra} {\rm{Hom}}_k(A, \, A)
\overset{\delta}{\lra} \ldots \\
& \ldots  \overset{\delta}{\lra} 
{\rm{Hom}}_k(A^{\ot n}, \, A) \overset{\delta}{\lra} 
{\rm{Hom}}_k(A^{\ot (n+1)}, \, A) \overset{\delta}{\lra}  \ldots \, .
\end{align*}
For a $k$-linear map $f: A^{\ot n} \to A$, the coboundary 
$\delta f : A^{\ot (n+1)} \to A$ is given by
\begin{align*}
& (\delta f)(a_1, \, a_2, \, \ldots \, , a_{n+1}) = a_1 f(a_2, \, \ldots
\, , a_{n+1})  \, + \\
& \Big( \sum_{i=1}^n (-1)^i f(a_1, \, a_2, \, \ldots \, , a_i a_{i+1}, \,
\ldots \, , a_n) \Big)  + (-1)^{n+1} f(a_1, \, a_2, \, \ldots \, , a_n)
a_{n+1}.  
\end{align*}
Of course, for $n = 0$, $(\delta f)(a_1) = a_1 f(1) - f(1)a_1$.  For 
$f \in {\rm{Hom}}_k(A^{\ot p}, \, A)$ and $g \in {\rm{Hom}}_k(A^{\ot
  q}, \, A)$, the Gerstenhaber (cup) product   
\cite{Gerstenhaber} 
$$  f \underset{G}{\cdot} g \in {\rm{Hom}}_k(A^{\ot (p+q)}, \, A) $$
is given by 
$$  (f \underset{G}{\cdot} g) (a_1, \, a_2, \, \ldots \, , \, a_{p+q})
=  f(a_1, \, \ldots \, , \, a_p) \cdot g(a_{p+1}, \, \ldots \, , \,
a_{p+q}).  $$
Gerstenhaber proves that $f \underset{G}{\cdot} g$ induces a graded
commutative product on $HH^*(A; \, A)$.  If $f \in HH^p(A; \, A)$ and 
$g \in HH^q(A; \, A)$, then $f \underset{G}{\cdot} g \in HH^{p+q}(A;
\, A)$.  

On the other hand, the Hochschild homology groups \cite[X.4]{Maclane}
$HH_*(A; \, A)$ are given by the homology of the chain complex
$$  A \overset{b}{\lla} A^{\ot 2} \overset{b}{\lla} \, \ldots \, 
\overset{b}{\lla} A^{\ot n} \overset{b}{\lla} A^{\ot (n+1)}
\overset{b}{\lla} \, \ldots \, ,  $$
where for $(a_0, \, a_1, \, \ldots \, , a_n) \in A^{\ot (n+1)}$,
\begin{align*}
&  b(a_0, \, a_1, \, \ldots \, , a_n) = \\
& \Big( \sum_{i=0}^{n-1} (-1)^i (a_0, \, \ldots \, , a_i a_{i+1}, \,
\ldots \, , a_n) \Big) + (-1)^n (a_n a_0,, a_1, \, \ldots \, , a_n).
\end{align*}
By $(HH^*(A), \, b^*)$ we mean the homology of the ${\rm{Hom}}_k$-dual
of the $b$-complex, given by 
\begin{align*} 
&{\rm{Hom}}_k(A, \, k) \overset{b^*}{\lra} {\rm{Hom}}_k(A^{\ot 2}, \, k)
\overset{b^*}{\lra} \, \ldots \\ 
&\ldots \, \overset{b^*}{\lra}
{\rm{Hom}}_k(A^{\ot n}, \, k) \overset{b^*}{\lra} 
{\rm{Hom}}_k(A^{\ot (n+1)}, \, k) \overset{b^*}{\lra} \, \ldots \, .  
\end{align*} 
For a $k$-linear map $\varphi : A^{\ot n} \to k$, $b^*(\varphi)
: A^{\ot (n+1)} \to k$ is given by
$$  b^*(\varphi) (a_0, \, \ldots \, , \, a_n) = \varphi(b(a_0, \,
\ldots \, , \, a_n)).  $$

Let $A = k[G]$ be a group ring.  For $g$, $h \in G$, define a
symmetric bilinear form
$\langle \ , \ \rangle : k[G] \times k[G] \to k$ with
$$  \langle g, \, h \rangle = \begin{cases} 1, & h = g^{-1} \\
                                            0, & h \neq g^{-1}.
                              \end{cases}  $$
Then extend $ \langle \ , \ \rangle $ to be linear in each factor,
which results in a $k$-linear map on the tensor product
$\langle \ , \ \rangle : k[G] \ot k[G] \to k$.  There is an injective
cochain map 
$$  \Phi_n : \big( {\rm{Hom}}_k (k[G]^{\ot n} , \, \, k[G]), \ \delta
\big) \to \big( {\rm{Hom}}_k 
 (k[G]^{\ot (n+1)} , \, \, k), \ b^* \big),  \ \ \   n \geq 0, $$
given by 
$$  \Phi_n (f) (g_0, \, g_1, \, g_2, \, \ldots \, , \, g_n) =
\langle g_0, \ f(g_1, \, g_2, \, \ldots \, , \, g_n) \rangle . $$
When $G$ is finite, $\Phi_n$ is, of course, an isomorphism of cochain complexes.

For $G$ an arbitrary group, we introduce a particular formula for
$\Phi_n$ used throughout the paper.  Recall that for $k$ an arbitrary
unital, commutative coefficient ring, $k[G]$ is a free $k$-module with
basis given by the elements of $G$.  For $g_0$,
$g_1$, $\ldots \,$, $g_n \in G$ and $h_1$, $h_2$, $\ldots \, $, $h_n
\in G$, let
$$  (g_0, \, g_1, \, \ldots \, , \, g_n)^{\#} : k[G]^{\ot n} \to k[G]  $$
denote the $k$-linear map determined by 
$$  (g_0, \, g_1, \, \ldots \, , \, g_n)^{\#} (h_1, \, h_2, \, \ldots
\, , \, h_n) = \begin{cases} g_0, & h_1 = g_1, \, \ldots \, , \, h_n =
                                                              g_n , \\
                             0,    & {\rm{otherwise}}.  \end{cases}  $$
Additionally, for $h_0 \in G$, let $(g_0, \, g_1, \, \ldots \, , \,
g_n)^{*}: k[G]^{\ot (n+1)} \to k$ be the $k$-linear map determined
by
$$  (g_0, \, g_1, \, \ldots \, , \, g_n)^{*} (h_0, \, h_1, \, \ldots
\, , \, h_n) = \begin{cases} 1, & 
h_0 = g_0, \,  h_1 = g_1, \, \ldots \, , \, h_n = g_n, \\
                             0,  & {\rm{otherwise}}.  \end{cases}  $$
Under this notation,
$$  \Phi_n \big( (g_0, \, g_1, \, g_2, \, \ldots \, , \, g_n)^{\#} \big) =
(g^{-1}_0, \, g_1, \, g_2, \, \ldots \, , \, g_n)^* .  $$

Let $I_n = {\rm{Im}} \, \Phi_n \subseteq {\rm{Hom}}_k(k[G]^{\ot (n+1)},
\, k)$.  Then $I_* = \{ I_n \}_{n \geq 0}$ is a subcomplex of 
${\rm{Hom}}_k(k[G]^{\ot (*+1)}, \, k)$, and there is a cochain map
\cite[Lemma 2.3]{Lodder} 
$$  \Psi_n : I_n \to {\rm{Hom}}_k (k[G]^{\ot n}, \, k[G]), \ \ \  n
\geq 0,  $$
induced by 
\begin{equation} \label{Psi}
 \Psi_n \big( (g_0, \, g_1, \, g_2, \, \ldots \, , \, g_n)^* \big) =
(g^{-1}_0, \, g_1, \, g_2, \, \ldots , \, , g_n)^{\#} .  
\end{equation} 
Then $\Psi_n$ is extended via $k$-linearity to all of $I_n$.  Clearly 
$ \Psi_n \circ \Phi_n = \bf{1}$ on ${\rm{Hom}}_k (k[G]^{\ot n}, \,
k[G])$, showing that $\Phi_n$ is injective on cochains for an
arbitrary discrete group $G$.    

\begin{lemma} \label{injectivity}
For any discrete group $G$, $k$ a field,  the induced map
$$ \Phi^* : HH^*(k[G]; \, k[G]) \to \big( HH^*(k[G]), \, b^* \big)  $$
is injective.
\end{lemma}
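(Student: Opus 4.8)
The plan is to use the factorization of $\Phi$ through the subcomplex $I_*$. By construction $\Phi_n$ is surjective onto $I_n = {\rm{Im}}\,\Phi_n$, and $\Psi_n \circ \Phi_n = \mathbf{1}$ on ${\rm{Hom}}_k(k[G]^{\ot n}, \, k[G])$; hence the corestriction $\bar\Phi : ({\rm{Hom}}_k(k[G]^{\ot *}, \, k[G]), \, \delta) \to (I_*, \, b^*)$ is an isomorphism of cochain complexes, with inverse $\Psi$. In particular $\bar\Phi$ is a quasi-isomorphism, and since $\Phi$ factors as the composite of $\bar\Phi$ with the inclusion $\iota : (I_*, \, b^*) \hookrightarrow ({\rm{Hom}}_k(k[G]^{\ot (*+1)}, \, k), \, b^*)$, the lemma is equivalent to the statement that $\iota$ induces an injection on cohomology. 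When $G$ is finite this is automatic, as $\Phi_n$ is then already an isomorphism onto all of ${\rm{Hom}}_k(k[G]^{\ot (n+1)}, \, k)$, so the content is the case of an infinite group.

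To prove that $H^*(\iota)$ is injective, I would pass to the short exact sequence of cochain complexes $0 \to I_* \overset{\iota}{\longrightarrow} {\rm{Hom}}_k(k[G]^{\ot (*+1)}, \, k) \to Q_* \to 0$, with $Q_*$ the quotient, and show the connecting homomorphism $\partial : H^{n-1}(Q_*) \to H^n(I_*)$ vanishes in each degree. Unwound, this says: if $\xi \in I_n$ is a $b^*$-cocycle and $\xi = b^*\psi$ for some cochain $\psi$ of the ambient complex, then $\psi$ can be replaced by a cochain lying in $I_{n-1}$ with the same coboundary; applying $\Psi_{n-1}$ then exhibits $\Psi_n(\xi)$ as a $\delta$-coboundary, which (since $\Psi_n(\Phi_n(f)) = f$) is exactly what is needed. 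The structural input is the explicit description of the subcomplex: $\varphi \in {\rm{Hom}}_k(k[G]^{\ot (n+1)}, \, k)$ lies in $I_n$ precisely when, for each fixed $(g_1, \, \ldots \, , g_n)$, the map $g_0 \mapsto \varphi(g_0, \, g_1, \, \ldots \, , g_n)$ has finite support. One uses the hypothesis $b^*\psi \in I_n$ to split off from $\psi$ a piece that is an ambient $b^*$-cocycle, working compatibly with the fact that the $b$-boundary couples the zeroth tensor slot to the others only through the two outer faces $(g_0, \, \ldots \, , g_n) \mapsto (g_0 g_1, \, g_2, \, \ldots \, , g_n)$ and $(g_0, \, \ldots \, , g_n) \mapsto (g_n g_0, \, g_1, \, \ldots \, , g_{n-1})$.

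Since $k$ is a field, I would also record the dual description $(HH^*(k[G]), \, b^*) \cong {\rm{Hom}}_k(HH_*(k[G]; \, k[G]), \, k)$, under which $\Phi^*[f]$ is the linear functional on Hochschild homology sending a cycle $(z_0, \, z_1, \, \ldots \, , z_n)$ to $\langle z_0, \, f(z_1, \, \ldots \, , z_n) \rangle$, extended linearly; thus the lemma asserts that this pairing of Hochschild cohomology against Hochschild homology has trivial left kernel. The cyclic bar complex $C_* = k[G]^{\ot (*+1)}$ splits as $\bigoplus_{[c]} C_*^{[c]}$ over conjugacy classes $[c]$ of $G$, so its dual splits as the corresponding product, and the subcomplex $I_*$ together with $\Phi$ inherit compatible splittings; I would localize the argument at a single $[c]$, where the relevant (co)homology is governed by the centralizer $Z_G(c)$, and finish there by a direct computation.

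The step I expect to be the genuine obstacle is this vanishing of the connecting homomorphism, i.e., realizing an ambient $b^*$-coboundary that happens to lie in $I_*$ as a $b^*$-coboundary \emph{within} $I_*$. The difficulty is that $\Psi$ is only defined on $I_*$, so one cannot simply apply it to a bounding cochain of the ambient complex; constructing the correcting cocycle that moves $\psi$ into $I_{n-1}$, and verifying that it exists uniformly across the conjugacy-class decomposition, is where the real work lies.
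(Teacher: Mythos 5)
Your reduction is sound as far as it goes: factoring $\Phi$ as the isomorphism $\bar{\Phi}$ onto $I_*$ followed by the inclusion $\iota : I_* \hookrightarrow W_* = \mathrm{Hom}_k(k[G]^{\otimes (*+1)}, k)$, and your characterization of $I_n$ as the cochains finitely supported in the zeroth tensor slot, are both correct, and the lemma is indeed equivalent to the injectivity of $H^*(\iota)$. But the proposal stops exactly where the proof has to begin: you reduce everything to the vanishing of the connecting homomorphism --- equivalently, to showing that a cocycle of $I_n$ that bounds in the ambient complex already bounds inside $I_*$ --- and then explicitly defer that step as ``where the real work lies.'' That statement \emph{is} the lemma in your formulation; without an argument for it (the conjugacy-class/centralizer localization is only a pointer to where one might look), there is no proof. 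Nor is the step routine: a bounding cochain $\psi \in W_{n-1}$ with $b^*\psi = \xi \in I_n$ may have infinite support in the zeroth slot, and the hypothesis $b^*\psi \in I_n$ constrains only certain alternating sums of values of $\psi$, not $\psi$ itself, so ``splitting off an ambient cocycle'' is precisely the unproved content.

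The paper avoids this difficulty by a different device. Using that $k$ is a field, it splits the chain complex as $(C_*, b) \simeq (A_*, 0) \oplus (B_* \oplus E_*, b)$ with $A_* \simeq HH_*(k[G];\, k[G])$, so that $b^*$-cohomology is computed by $(\mathrm{Hom}_k(A_*, k), 0)$ with \emph{zero} differential. On that summand each cohomology class has a unique representative, so there are no bounding cochains to correct; the image of $\Phi^*$ is identified with the subspace $J_*$ of finitely supported functionals, and $\Psi$ is extended by zero on a complement $M_*$ to give a left inverse with $\Psi^* \circ \Phi^* = \mathbf{1}$ on cohomology. To complete your argument you would either have to prove the vanishing of the connecting homomorphism directly (the hard step you postponed), or abandon the long exact sequence and construct a one-sided inverse on cohomology as the paper does.
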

\begin{proof}
We first borrow some ideas from the universal coefficient
theorem, which states that for $k$ a field, there is a natural
isomorphism
$$  \big( HH^*(k[G]), \, b^* \big) \to {\rm{Hom}}_k(HH_*(k[G]; \,
  k[G]), \, k).  $$
Let $C_* = \big( k[G]^{\ot (*+1)}, \, b \big)$ be the chain complex for
Hochschild homology.  As vector spaces, $C_* \simeq A_* \oplus (B_*
\oplus E_*)$, where $B_*$ is the subspace of boundaries of $C_*$, 
$A_* \simeq Z_*/B_*$, $Z_*$ is the subspace of cycles of $C_*$, and
$E_*$ is a complementary subspace.  As chain complexes,
$$  (C_*, \, b) \simeq (A_*, \, 0) \oplus (B_* \oplus E_*, \, b),  $$
where the boundary map of $A_*$ is 0.  Clearly,
$ A_* = H_*(A_*) \simeq HH_*(k[G]; \, k[G])$.  
As cochain complexes,
$$  \big( {\rm{Hom}}_k (C_*, \, k), \, b^* \big) \simeq
\big( {\rm{Hom}}_k (A_*, \, k), \, 0 \big) \oplus
\big( {\rm{Hom}}_k (B_* \oplus E_*, \, k), \, b^* \big).  $$
On cohomology ${\rm{Im}}\Phi^*$ is contained in
$$  {\rm{Hom}}_k (A_*, \, k) = H^*({\rm{Hom}}_k (A_*, \, k)) \simeq
\big( HH^* (k[G]), \, b^* \big).  $$

Let $J_n$ be the subspace of ${\rm{Hom}}_k (A_n , \, k)$ given by 
$\al : A_n \to k$ with $\al$ finitely supported on the first tensor
factor of $k[G]^{\ot (n+1)}$, i.e.,  for any $x_i \in G$,
$$  \al( h, \, x_1, \, x_2 , \, \ldots \, , \, x_n) \neq 0  $$
for only finitely many $h \in G$.  We claim that 
${\rm{Im}}(\Phi^*_n) = J_n$ on cohomology.  First, let
$f \in {\rm{Hom}}_k (k[G]^{\ot n}, \, k[G])$.  Then 
$f(x_1, \, x_2, \, \ldots \, , \, x_n) = \sum_{i=1}^m c_i h_i$,
where each $c_i$ is a non-zero element of $k$.  
Thus, $\Phi_n(f)(h, \, x_1, \, \ldots \, , \, x_n) \neq 0$ only for $h
= h_i^{-1}$, $i = 1, \, 2, \, \ldots \, , m$, and
${\rm{Im}}(\Phi^*_n) \subseteq J_n$. Second, let $\al \in J_n$.  
Then $\al (h, \, x_1, \, \ldots \, , \, x_n) \neq 0$ only for 
finitely many $h = h_1, \, h_2, \, \ldots \, , \, h_p$, and
$\Psi_n (\al) \in {\rm{Hom}}_k (k[G]^{\ot n}, \, k[G])$, where $\Psi$
is the cochain map given via equation \eqref{Psi}.  Thus,
$\Phi_n (\Psi_n (\al)) = \al$, and $J_n \subseteq {\rm{Im}}
(\Phi^*_n)$.  

As vector spaces, ${\rm{Hom}}_k(A_n, \, k) \simeq J_n \oplus M_n$,
where $M_n$ is a complementary subspace.  
Finally, $\Psi$ extends to a cochain map
$$ \Psi_n : \big( {\rm{Hom}}_k (A_n , \, k), \, 0 \big) \to
\big( {\rm{Hom}}_k ( k[G]^{\ot n}, \, k[G]), \, \delta \big) $$
by setting $\Psi (\al ) = 0$ for $\al \in M_n$.  If $f \in
{\rm{Hom}}_k( k[G]^{\ot n}, \, k[G])$ represents a cohomology class in
$HH^n(k[G]; \, k[G])$, then using the explicit formulas for $\Phi_n$
and $\Psi_n$, we have
$$  \Psi^*_n \circ \Phi^*_n ([f]) = [\Psi_n \circ \Phi_n (f)] = [f].  $$
Thus, $\Psi^* \circ \Phi^* = {\bf{1}}$, proving the injectivity of
$\Phi^*$.  Since the injectivity of $\Phi^*$ holds for an arbitrary 
field $k$, the result also holds for $k = \bz$.
\end{proof}

The above lemma suggests the study of $b^*$ cohomology, $\big(
HH^*(k[G]), \, b^* \big)$, as a target for a TQFT, since (i) $b^*$
cohomology contains an injective image of $HH^*(k[G]; \, k[G])$, and
(ii) $b^*$ cohomology is more closely related to the free loop space
and string theory via 
$$ \big( HH^*(k[G]), \, b^* \big) 
\simeq H^*({\rm{Maps}}(S^1, \, BG)), $$
$k$ any commutative, unital coefficient ring.

\begin{corollary}
Let $G$ be an arbitrary discrete group, $k$ a field.  If
${\rm{Maps}}(S^1, \, BG)$ is of finite type, i.e., $HH_n(k[G]; \,
k[G])$ is a finite dimensional vector space for each $n$, then the
induced map 
$$ \Phi^* : HH^*(k[G]; \, k[G]) \to \big( HH^*(k[G]), \, b^* \big)  $$ 
is an isomorphism.  
\end{corollary}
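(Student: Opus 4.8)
The plan is to deduce the statement from Lemma~\ref{injectivity} by a dimension count, splitting everything over the conjugacy classes of $G$. By Lemma~\ref{injectivity} the map $\Phi^*$ is injective, so only surjectivity is at issue, and this is where the finite type hypothesis is used. Over the field $k$, the universal coefficient theorem appearing in the proof of Lemma~\ref{injectivity} identifies $(HH^n(k[G]), b^*)$ with ${\rm{Hom}}_k(HH_n(k[G]; \, k[G]), \, k)$, which under the hypothesis is finite dimensional of dimension $\dim_k HH_n(k[G]; \, k[G])$. Hence $\Phi^*_n$ is an injection of $HH^n(k[G]; \, k[G])$ into a finite dimensional space, and it will be an isomorphism as soon as $\dim_k HH^n(k[G]; \, k[G]) = \dim_k HH_n(k[G]; \, k[G])$.

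To get that equality, first note that $HH_0(k[G]; \, k[G]) = k[G]/\langle g_0g_1 - g_1g_0 \rangle$ has the conjugacy classes of $G$ as a $k$-basis, so finite type (already in degree $0$) forces $G$ to have only finitely many conjugacy classes $[g_1], \ldots, [g_r]$. The Hochschild chain complex then splits as a \emph{finite} direct sum of subcomplexes $C_* = \bigoplus_{i=1}^r C^{[g_i]}_*$, where $C^{[g_i]}_n$ is spanned by the tensors $(g_0, g_1, \ldots, g_n)$ with $g_0g_1\cdots g_n$ conjugate to $g_i$ (the interior faces of $b$ fix the product $g_0g_1\cdots g_n$, the last face conjugates it), and by the classical computation \cite[7.4]{cyclic-hom} one has $H_n(C^{[g_i]}_*) \cong H_n(Z_G(g_i); \, k)$, finite dimensional in each degree by hypothesis. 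Thus $\dim_k HH_n(k[G]; \, k[G]) = \sum_{i=1}^r \dim_k H_n(Z_G(g_i); \, k) = \sum_{i=1}^r \dim_k H^n(Z_G(g_i); \, k)$ by the universal coefficient theorem once more. For the cohomology side I would invoke the parallel decomposition $HH^n(k[G]; \, k[G]) \cong H^n(G; \, (k[G])^{\rm{ad}})$, and, since there are only finitely many conjugacy classes so that $(k[G])^{\rm{ad}} = \bigoplus_{i=1}^r k[G/Z_G(g_i)]$ is a finite sum of $G$-modules, the isomorphism $HH^n(k[G]; \, k[G]) \cong \bigoplus_{i=1}^r H^n(Z_G(g_i); \, k)$; this gives $\dim_k HH^n(k[G]; \, k[G]) = \sum_{i=1}^r \dim_k H^n(Z_G(g_i); \, k)$, the same number.

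Putting the two computations together gives $\dim_k HH^n(k[G]; \, k[G]) = \dim_k HH_n(k[G]; \, k[G])$, so the injection $\Phi^*_n$ is an isomorphism for every $n$ and $\Phi^*$ is an isomorphism. The step I expect to require the most care is the cohomology decomposition $HH^n(k[G]; \, k[G]) \cong \bigoplus_i H^n(Z_G(g_i); \, k)$, since for infinite $G$ the centralizer $Z_G(g_i)$ may have infinite index, so Shapiro's lemma does not literally identify $H^n(G; \, k[G/Z_G(g_i)])$ with $H^n(Z_G(g_i); \, k)$; one would either cite the known computation of the Hochschild cohomology of group rings under finiteness hypotheses, or argue at the chain level, comparing the subcomplex $I^{[g_i]}_* = I_* \cap {\rm{Hom}}_k(C^{[g_i]}_*, \, k)$ with the dual bar complex of $Z_G(g_i)$ --- a comparison that is automatic for the identity class, where $I^{[e]}_* = {\rm{Hom}}_k(C^{[e]}_*, \, k)$ because exactly one $h \in G$ satisfies $hx_1\cdots x_n = e$ for each fixed $(x_1, \ldots, x_n)$. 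Everything else --- the universal coefficient identifications, the conjugacy splitting of $C_*$, and injectivity --- is routine given Lemma~\ref{injectivity}.
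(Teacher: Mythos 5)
Your reduction to the dimension count $\dim_k HH^n(k[G];\,k[G]) = \dim_k HH_n(k[G];\,k[G])$ is a genuinely different route from the paper's, and the homological half of it is sound: finite type in degree $0$ forces finitely many conjugacy classes, the chain complex splits as $C_* = \bigoplus_i C_*^{[g_i]}$ with $H_*(C_*^{[g_i]}) \cong H_*(Z_G(g_i);\,k)$, and over a field $\dim_k H_n = \dim_k H^n$ for each centralizer. But the gap is exactly where you predicted it, and it is not a technicality you can defer: the identification $HH^n(k[G];\,k[G]) \cong \bigoplus_i H^n(Z_G(g_i);\,k)$ requires Shapiro's lemma in cohomology, which applies to the coinduced module ${\rm{Hom}}_{k[Z_G(g_i)]}(k[G],\,k)$, whereas the summand of $k[G]^{\rm{ad}}$ attached to $[g_i]$ is the induced module $k[G/Z_G(g_i)] = k[G]\otimes_{k[Z_G(g_i)]}k$. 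These agree only when $Z_G(g_i)$ has finite index, and the hypothesis of the corollary does not grant that: an infinite group with finitely many conjugacy classes (such groups exist) has $[G:Z_G(g)]$ infinite for every non-central $g$, since $[G:Z_G(g)]$ is the size of the conjugacy class. There is no standard ``known computation'' of $HH^*$ of a group ring to cite in this generality, so as written your argument establishes the corollary only for groups all of whose centralizers have finite index (in particular for finite $G$); the cohomological dimension count, which is the half that actually carries the surjectivity, is unproved in the stated generality.

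The paper closes the argument by a different mechanism that never computes $HH^*(k[G];\,k[G])$ at all: it reuses the one-sided inverse $\Psi_*$ constructed in the proof of Lemma \ref{injectivity}, together with the universal-coefficient identification of $\big(HH^n(k[G]),\,b^*\big)$ with ${\rm{Hom}}_k(A_n,\,k)$, where $A_n \simeq HH_n(k[G];\,k[G])$. There the image of $\Phi^*$ was identified with the subspace $J_n$ of functionals finitely supported on the first tensor factor, and the role of the finite type hypothesis is to force $J_n$ to be all of ${\rm{Hom}}_k(A_n,\,k)$, so that $\Psi^*$ becomes a two-sided inverse of $\Phi^*$. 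If you want to rescue your approach, the honest fix is what you sketch for the identity class: show directly at the cochain level that when $HH_n$ is finite dimensional every class in $\big(HH^n(k[G]),\,b^*\big)$ is represented by a cocycle finitely supported on the first factor --- but at that point you have reproduced the paper's argument, and the conjugacy-class bookkeeping is doing no work.
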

\begin{proof}
Using the notation of Lemma \eqref{injectivity} , we see that
${\rm{Hom}}_k(A_n, \, k)$ is a finite dimensional vector space for
each $n$.  Thus, $\Psi^* : {\rm{Hom}}_k(A_n, \, k) \to
HH^*(k[G]; \, k[G])$ is an isomorphism.  Since the result holds for an
arbitrary field $k$, $\Phi^*$ is also an isomorphism for $k = \bz$
when ${\rm{Maps}}(S^1, \, BG)$ is of finite type.
\end{proof}

We now define the Gerstenhaber product on the
complex $I_*$.  

\begin{definition}  \label{b*Gerstenhaber}
Let 
$$ \al \in I_p \subseteq {\rm{Hom}}_k(k[G]^{\ot (p+1)}, \, k), \ \ \ 
\be \in I_q \subseteq {\rm{Hom}}_k(k[G]^{\ot (q+1)}, \, k).  $$  
Then
$$  \al \underset{G}{\cdot} \be \in I_{p+q} \subseteq
{\rm{Hom}}_k(k[G]^{\ot (p+q+1)}, \, k) $$
is defined by
$$  \al \underset{G}{\cdot} \be = \Phi_{p+q} \big( \Psi_p (\al)
\underset{G}{\cdot} \Psi_q( \be) \big).  $$
\end{definition}

\begin{lemma}
The Gerstenhaber product is well-defined on $(HH^*(I_*), \ b^*)$, the
Hochschild cohomology of the $I_*$ complex.
\end{lemma}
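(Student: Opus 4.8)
The plan is to promote $(I_*, b^*, \underset{G}{\cdot})$ to a differential graded algebra: I will verify the Leibniz identity
\[
b^*\big(\al \underset{G}{\cdot} \be\big) \;=\; (b^*\al) \underset{G}{\cdot} \be \;+\; (-1)^p\, \al \underset{G}{\cdot} (b^*\be), \qquad \al \in I_p,\ \be \in I_q,
\]
at the level of cochains. Once this holds, well-definedness on $(HH^*(I_*), b^*)$ is the usual formal consequence: a product of two $b^*$-cocycles is again a $b^*$-cocycle, and $(b^*\al')\underset{G}{\cdot}\be = b^*(\al'\underset{G}{\cdot}\be)$ whenever $\be$ is a cocycle (and symmetrically when $\al$ is a cocycle and $\be$ a coboundary), so the class $[\al \underset{G}{\cdot} \be]$ depends only on $[\al]$ and $[\be]$.

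The argument is pure transport of structure through $\Phi$. First I would note that $\Phi_\bullet$ and $\Psi_\bullet$ restrict to mutually inverse isomorphisms of cochain complexes between $\big(\mathrm{Hom}_k(k[G]^{\ot *}, k[G]), \delta\big)$ and $(I_*, b^*)$: the identity $\Psi_n \circ \Phi_n = \mathbf{1}$ is recorded above, and it forces $\Phi_n \circ \Psi_n \circ \Phi_n = \Phi_n$, hence $\Phi_n \circ \Psi_n = \mathbf{1}$ on $I_n = \mathrm{Im}\,\Phi_n$. That $\Phi$ is a cochain map is classical (it is the map induced by the duality pairing), and that $\Psi$ is a cochain map on $I_*$ is \cite[Lemma 2.3]{Lodder}; in particular $\delta \circ \Psi_n = \Psi_{n+1} \circ b^*$ on $I_n$.

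Next I would start from Gerstenhaber's Leibniz rule on the original complex, namely $\delta(f \underset{G}{\cdot} g) = (\delta f)\underset{G}{\cdot} g + (-1)^p f \underset{G}{\cdot}(\delta g)$ for $f \in \mathrm{Hom}_k(k[G]^{\ot p}, k[G])$ \cite{Gerstenhaber}, and specialize it to $f = \Psi_p(\al)$, $g = \Psi_q(\be)$. Rewriting $\delta\, \Psi_p(\al) = \Psi_{p+1}(b^*\al)$ and $\delta\, \Psi_q(\be) = \Psi_{q+1}(b^*\be)$ — legitimate since $I_*$ is a subcomplex, so $b^*\al \in I_{p+1}$ and $b^*\be \in I_{q+1}$ — and then applying $\Phi_{p+q+1}$, I invoke successively that $\Phi$ is a cochain map and Definition \ref{b*Gerstenhaber}, applied to the pair $(\al,\be)$ on the left-hand side and to $(b^*\al,\be)$ and $(\al,b^*\be)$ on the right-hand side. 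This yields exactly the displayed Leibniz identity, from which the descent to $(HH^*(I_*), b^*)$ follows as in the first paragraph.

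I do not anticipate a genuine obstacle; the whole argument is transport of structure. The only point requiring care is bookkeeping — matching the sign convention in Gerstenhaber's Leibniz rule to the cohomological grading used here (degree $p$ for $\al \in I_p$), and confirming that each of the three applications of Definition \ref{b*Gerstenhaber} is made to a pair of cochains both of whose entries lie in $I_*$, which holds precisely because $I_*$ is a subcomplex of $\mathrm{Hom}_k(k[G]^{\ot(*+1)}, k)$.
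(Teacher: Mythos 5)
Your proposal is correct and follows essentially the same route as the paper: the paper's proof is precisely the computation $b^*(\al \underset{G}{\cdot} \be) = \Phi_{p+q+1}\big(\delta(\Psi_p(\al)\underset{G}{\cdot}\Psi_q(\be))\big) = \Phi_{p+q+1}\big(\Psi(b^*\al)\underset{G}{\cdot}\Psi(\be) + (-1)^p\Psi(\al)\underset{G}{\cdot}\Psi(b^*\be)\big)$, i.e.\ transporting Gerstenhaber's Leibniz rule through the cochain maps $\Phi$ and $\Psi$ exactly as you describe. Your additional bookkeeping (that $\Phi_n\circ\Psi_n=\mathbf{1}$ on $I_n$ and that $I_*$ being a subcomplex makes the right-hand side products legitimate) is a correct and slightly more explicit rendering of the same argument.
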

\begin{proof}
With $\al$ and $\be$ as in Definition \eqref{b*Gerstenhaber}, 
\begin{align*}
& b^*( \al \underset{G}{\cdot} \be ) = \Phi_{p+q+1} \big( \delta (
\Psi_p(\al) \underset{G}{\cdot} \Psi_q (\be)) \big) \\
& = \Phi_{p+q+1} \big( \delta( \Psi_p( \al )) \underset{G}{\cdot}
\Psi_q (\be ) + (-1)^p \Psi_p ( \al ) \underset{G}{\cdot}
\delta ( \Psi_q (\be) ) \big) \\
& = \Phi_{p+q+1} \big(  \Psi_p(b^*( \al )) \underset{G}{\cdot}
\Psi_q (\be ) + (-1)^p \Psi_p ( \al ) \underset{G}{\cdot}
\Psi_q ( b^* (\be) ) \big).
\end{align*}
\end{proof}

\begin{lemma} \label{Gerstenhaberproduct}
Let 
\begin{align*}
& \al = (\al_0, \ \al_1, \ \ldots \, , \ \al_p)^* \in I_p, \ \ \ \al_i
\in G, \\
& \be = (\be_0, \ \be_1, \ \ldots \, , \ \be_q)^* \in I_q, \ \ \ \be_i
\in G.  
\end{align*}
Then
$$ \al \underset{G}{\cdot} \be =
( \be_0 \al_0, \ \al_1, \ \ldots \, , \ \al_p, \ \be_1, \ \ldots \, , \
\be_q)^* .  $$
\end{lemma}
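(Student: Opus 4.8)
The plan is to evaluate Definition~\ref{b*Gerstenhaber} directly on the two basis cochains, unwinding the explicit formulas for $\Psi$ and $\Phi$. First note that $\al$ really does lie in $I_p$: since $\al_0 \in G$ is invertible, the formula $\Phi_p\big((g_0, \ldots, g_p)^{\#}\big) = (g_0^{-1}, g_1, \ldots, g_p)^*$ recorded above gives $\Phi_p\big((\al_0^{-1}, \al_1, \ldots, \al_p)^{\#}\big) = \al$, so $\al \in I_p$ and, by \eqref{Psi}, $\Psi_p(\al) = (\al_0^{-1}, \al_1, \ldots, \al_p)^{\#}$; likewise $\be \in I_q$ with $\Psi_q(\be) = (\be_0^{-1}, \be_1, \ldots, \be_q)^{\#}$. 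Thus Definition~\ref{b*Gerstenhaber} applies and the whole computation reduces to identifying $\Phi_{p+q}\big(\Psi_p(\al) \underset{G}{\cdot} \Psi_q(\be)\big)$.

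Next I would compute the ordinary Gerstenhaber (cup) product of the two $k$-linear maps $\Psi_p(\al) : k[G]^{\ot p} \to k[G]$ and $\Psi_q(\be) : k[G]^{\ot q} \to k[G]$ from its defining formula. Evaluating on $h_1, \ldots, h_{p+q} \in G$ gives the product in $k[G]$ of $(\al_0^{-1}, \al_1, \ldots, \al_p)^{\#}(h_1, \ldots, h_p)$ and $(\be_0^{-1}, \be_1, \ldots, \be_q)^{\#}(h_{p+1}, \ldots, h_{p+q})$, which vanishes unless $h_i = \al_i$ for $1 \leq i \leq p$ and $h_{p+j} = \be_j$ for $1 \leq j \leq q$, in which case it equals the single group element $\al_0^{-1} \cdot \be_0^{-1} = (\be_0\al_0)^{-1}$. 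Hence $\Psi_p(\al) \underset{G}{\cdot} \Psi_q(\be) = \big((\be_0\al_0)^{-1}, \al_1, \ldots, \al_p, \be_1, \ldots, \be_q\big)^{\#}$.

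Finally, apply $\Phi_{p+q}$ to this $\#$-generator using the same displayed formula for $\Phi$, which inverts the first entry and leaves the rest untouched; this yields $\al \underset{G}{\cdot} \be = \big(\be_0\al_0, \al_1, \ldots, \al_p, \be_1, \ldots, \be_q\big)^*$, as claimed. The only point needing any attention is the order reversal $\al_0^{-1}\be_0^{-1} = (\be_0\al_0)^{-1}$ for group inverses --- this is precisely why the product carries $\be_0\al_0$ (and not $\al_0\be_0$) in its leading slot; apart from that the argument is a mechanical substitution of the three defining formulas, so I do not expect any real obstacle.
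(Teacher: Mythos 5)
Your proposal is correct and follows essentially the same route as the paper: unwind Definition~\ref{b*Gerstenhaber} by applying $\Psi$ to each basis cochain, compute the cup product of the resulting $\#$-generators to get leading entry $\al_0^{-1}\be_0^{-1}=(\be_0\al_0)^{-1}$, and apply $\Phi_{p+q}$ to invert that entry. The only difference is that you spell out the pointwise evaluation of the cup product (and correctly write the outer map as $\Phi_{p+q}$, where the paper's displayed computation has a typo reading $\Psi_{p+q}$), which is a harmless elaboration.
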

\begin{proof}
\begin{align*}
& \al \underset{G}{\cdot} \be =
\Psi_{p+q}( (\al_0^{-1}, \, \al_1, \, \ldots \, , \, \al_p)^{\#}
\underset{G}{\cdot}
(\be_0^{-1}, \, \be_1, \, \ldots \, , \, \be_q)^{\#}) \\
& = \Psi_{p+q}( (\al_0^{-1} \be_0^{-1}, 
\ \al_1, \, \ldots \, , \, \al_p, \ \be_1, \, \ldots \, , \be_q)^{\#}
\\
& = ( \be_0 \al_0, \ \al_1, \ \ldots \, , \ \al_p, \ \be_1, \ \ldots
\, , \
\be_q)^* 
\end{align*}
\end{proof}
The above lemma express the Gerstenhaber product on natural $k$-module
generators of $I_*$, which by linearity becomes the
convolution product on $I_*$, proven below.  

\begin{lemma}  \label{G-product}
Let $\al \in I_p$ and $\be \in I_q$ be arbitrary. Then
\begin{align*}
& ( \al \underset{G}{\cdot} \be )(g_0, \, g_1, \, \ldots \, , \, g_p ,
\, g_{p+1}, \, \ldots \, , \, g_{p+q}) \\
& = \sum_{h \in G} \al(h, \, g_1, \, \ldots \, , \, g_p) 
\be(g_0 h^{-1}, \, g_{p+1}, \, \, \ldots \, , \, g_{p+q}).
\end{align*}
\end{lemma}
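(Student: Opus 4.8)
The plan is to unwind Definition~\ref{b*Gerstenhaber} on a single basis element $(g_0, g_1, \ld, g_{p+q})$ of $k[G]^{\ot(p+q+1)}$ and to match the result against the right-hand side of the claimed identity. A reduction to Lemma~\ref{Gerstenhaberproduct} by linearity alone is a little delicate, since a general element of $I_p$ is only a \emph{possibly infinite} linear combination of the generators $(\al_0, \ld, \al_p)^*$; but on a fixed basis element only finitely many terms ever contribute, so the cleanest route is a direct computation there.

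First I would write $\al = \Phi_p(f)$ with $f \in {\rm{Hom}}_k(k[G]^{\ot p}, \, k[G])$ and $\be = \Phi_q(g)$ with $g \in {\rm{Hom}}_k(k[G]^{\ot q}, \, k[G])$; since $\Psi_p \circ \Phi_p$ and $\Psi_q \circ \Phi_q$ are the identity we have $f = \Psi_p(\al)$, $g = \Psi_q(\be)$, so Definition~\ref{b*Gerstenhaber} gives $\al \underset{G}{\cdot}\be = \Phi_{p+q}(f \underset{G}{\cdot} g)$. Unwinding the defining formulas for $\Phi_{p+q}$ and for the Gerstenhaber product on ${\rm{Hom}}$-cochains then yields
$$ (\al \underset{G}{\cdot}\be)(g_0, g_1, \ld, g_{p+q}) = \big\langle\, g_0, \ f(g_1, \ld, g_p)\cdot g(g_{p+1}, \ld, g_{p+q}) \,\big\rangle . $$
Next I would expand $f(g_1, \ld, g_p) = \sum_i c_i u_i$ and $g(g_{p+1}, \ld, g_{p+q}) = \sum_j d_j v_j$ as finite $k$-linear combinations of elements of $G$. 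By bilinearity of the product on $k[G]$ and of the form $\langle\ ,\ \rangle$, and the fact that $\langle u, v\rangle$ equals $1$ when $v = u^{-1}$ and $0$ otherwise, the displayed expression becomes $\sum c_i d_j$, summed over the pairs $(i,j)$ with $u_i v_j = g_0^{-1}$.

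On the other side, $\al(h, g_1, \ld, g_p) = \langle h, f(g_1, \ld, g_p)\rangle = \sum_{i\,:\,u_i = h^{-1}} c_i$ and $\be(g_0 h^{-1}, g_{p+1}, \ld, g_{p+q}) = \langle g_0 h^{-1}, g(g_{p+1}, \ld, g_{p+q})\rangle = \sum_{j\,:\,v_j = h g_0^{-1}} d_j$. The product of these scalars vanishes unless $h = u_i^{-1}$ for some $i$, of which there are only finitely many; this finite-support condition is exactly the one defining $I_p$, and it is what makes the sum over $h \in G$ in the statement a finite, well-defined sum. Collecting the surviving terms forces $h = u_i^{-1}$ and hence $v_j = u_i^{-1} g_0^{-1}$, i.e. $u_i v_j = g_0^{-1}$, so $\sum_{h \in G}\al(h, g_1, \ld, g_p)\,\be(g_0 h^{-1}, g_{p+1}, \ld, g_{p+q})$ equals the same sum $\sum_{(i,j)\,:\,u_i v_j = g_0^{-1}} c_i d_j$; comparison finishes the proof. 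There is no essential obstacle: the lemma is really an unwinding, and the only point requiring a little care is the bookkeeping in this last paragraph --- in particular, that no pair $(i,j)$ is counted twice, which holds because $h$ is determined by $u_i$.
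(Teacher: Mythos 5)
Your proof is correct and follows essentially the same route as the paper's: both fix $g_1, \ldots, g_{p+q}$, expand the values of $\Psi_p(\al)$ and $\Psi_q(\be)$ as finite $k$-linear combinations of group elements, and match the finitely many $h \in G$ that contribute to the convolution sum. The only cosmetic difference is that the paper first records the product on generators (Lemma \ref{Gerstenhaberproduct}) and then passes to the general case by the same finite-support bookkeeping, whereas you unwind $\Phi$, $\Psi$ and the pairing directly; your explicit attention to why the sum over $h$ is well defined is precisely the point the paper's ``details below'' also address.
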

\begin{proof}
In the special case $\al = (\al_0, \, \al_1, \, \ldots \, , \,
\al_p)^*$ and $\be = (\be_0, \, \be_1, \, \ldots \, , \, \be_q)^*$, it
follows from 
$$  \al \underset{G}{\cdot} \be =
( \be_0 \al_0, \ \al_1, \ \ldots \, , \ \al_p, \ \be_1, \ \ldots \, , \
\be_q)^*  $$
that $(\al \underset{G}{\cdot} \be)(g_0, \, g_1, \, \ldots \, , \,
g_{p+q})$ is non-zero only if $g_0 = \be_0 \al_0$.  Also, the only
non-zero summand in 
$$  \sum_{h \in G} \al(h, \, g_1, \, \ldots \, , \, g_p) 
\be(g_0 h^{-1}, \, g_{p+1},  \, \ldots \, , \, g_{p+q})  $$
occurs when $h = \al_0$ and $g_0 = \be_0 \al_0$, in which case
$$  (\al \underset{G}{\cdot} \be)(g_0, \, \ldots \, , \,
g_{p+q}) = \sum_{h \in G} \al(h, \, g_1, \, \ldots \, , \, g_p) 
\be(g_0 h^{-1}, \, g_{p+1}, \, \ldots \, , \, g_{p+q}). $$
The lemma follows by linearity, with the details below. 

For arbitrary $\al \in I_p$, $\be \in I_q$, 
let $f_1 = \Psi_p (\al)$, $f_2 = \Psi_q (\be)$.  Then
$f_1(g_1, \, \ldots \, , \, g_p) = \sum_{i=1}^n c_i \mu_i$, where $c_i
\in k$ and $\mu_i$ are distinct\ elements of $G$.
Thus, for fixed $g_1$, $g_2$, $\ldots$, $g_p \in G$, 
$\al(h, \, g_1, \, \ldots \, , \, g_p)$ is non-zero on only finitely
many $h \in G$.  A similar statement holds for
$\be(g_0 h^{-1}, \, g_{p+1}, \, \, \ldots \, , \, g_{p+q})$.  Thus the
sum
$$  \sum_{h \in G} \al(h, \, g_1, \, \ldots \, , \, g_p) 
\be(g_0 h^{-1}, \, g_{p+1}, \, \, \ldots \, , \, g_{p+q}) $$
is well-defined, even when $G$ is an infinite group.  Consider
\begin{align*}
&  \al = \sum_{i=1}^n c_i(a_i, \, g_1, \, \ldots \, , \, g_p)^*, \ \
a_i \in G, \ c_i \in k, \\
& \be = \sum_{i=1}^m \ell_j (b_j, \, g_{p+1}, \, \ldots \, , \,
g_{p+q})^*, \ \ b_j \in G, \ \ell_j \in k .
\end{align*}
From Lemma \eqref{Gerstenhaberproduct}
$$  (\al \underset{G}{\cdot} \be) = \sum_{i, \, j} 
c_i \ell_j (b_j a_i, \, g_1, \, \ldots \, , \, g_{p+q})^*.  $$
The only non-zero terms of $(\al \underset{G}{\cdot} \be)^*(g_0, \,
\ldots \, , \, g_{p+q})$ occur when $g_0 = b_j a_i$ for some $i$ and
$j$.  Likewise the only non-zero summands of 
$$  \sum_{h \in G} \al(h, \, g_1, \, \ldots \, , \, g_p) 
\be(g_0 h^{-1}, \, g_{p+1}, \, \, \ldots \, , \, g_{p+q}) $$
occur when $h = a_i$ and $g_0 = b_j a_i$, in which case
$$  (\al \underset{G}{\cdot} \be)(g_0, \, \ldots \, , \,
g_{p+q}) = \sum_{h \in G} \al(h, \, g_1, \, \ldots \, , \, g_p) 
\be(g_0 h^{-1}, \, g_{p+1}, \, \ldots \, , \, g_{p+q}). $$   
\end{proof}
\begin{definition}
An element $\al \in {\rm{Hom}}_k (k[G], \, k)$ with $\al (h) \neq 0$
for only finitely many $h \in G$ is called finitely supported.
\end{definition}
Note that for $\al \in I_0 \subseteq {\rm{Hom}}_k (k[G], \, k)$, $\al$ is
finitely supported.
\begin{corollary}
For $\al$, $\be \in {\rm{Hom}}_k (k[G], \, k)$ finitely supported, the
Gerstenhaber product is given by the convolution product, i.e.,
$$ \al \underset{G}{\cdot} \be = \al * \be =
\sum_{h \in G} \al(h) \be(g_0h^{-1}).  $$
\end{corollary}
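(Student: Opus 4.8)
The plan is to obtain this as the $p = q = 0$ case of Lemma~\ref{G-product}. The one preliminary point is to reconcile the hypothesis ``$\al$, $\be$ finitely supported'' with membership in $I_0$, since the Gerstenhaber product on $b^*$ cochains was only defined in Definition~\ref{b*Gerstenhaber} on the subcomplex $I_* = \{\, {\rm{Im}}\,\Phi_n \,\}$.

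First I would check that $I_0 = {\rm{Im}}\,\Phi_0$ is exactly the space of finitely supported functionals on $k[G]$. On the one hand, $\Phi_0$ sends $(g_0)^{\#} \colon k \to k[G]$, $1 \mapsto g_0$, to $(g_0^{-1})^{*}$, and $(g_0)^{\#}$ is determined by the finite datum $g_0 \in k[G]$; hence every element of $I_0$ is a finite $k$-linear combination of the $(g_0^{-1})^{*}$ and so is finitely supported. On the other hand, a finitely supported $\al$ with support $\{h_1, \ld, h_m\}$ satisfies $\al = \sum_{i=1}^m \al(h_i)\, h_i^{*} = \Phi_0\big( \sum_{i=1}^m \al(h_i)\, (h_i^{-1})^{\#} \big) \in I_0$. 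Thus the corollary's hypothesis says precisely $\al, \be \in I_0$, and $\al \underset{G}{\cdot} \be \in I_{0+0} = I_0$ is defined.

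Then I would simply specialize Lemma~\ref{G-product} to $p = q = 0$: there are no ``interior'' arguments $g_1, \ld, g_p$ or $g_{p+1}, \ld, g_{p+q}$, so the general formula collapses to
$$ (\al \underset{G}{\cdot} \be)(g_0) = \sum_{h \in G} \al(h)\, \be(g_0 h^{-1}), $$
which is the convolution product $\al * \be$ recorded in the Introduction. Since $\al$ is finitely supported, only finitely many $h$ contribute, so the right-hand side is a well-defined element of $k$ even when $G$ is infinite (this finiteness is also part of the content of Lemma~\ref{G-product}).

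I do not anticipate any genuine obstacle here: once the identification of $I_0$ with the finitely supported functionals is in place, the statement is a direct reading of Lemma~\ref{G-product}. The only thing to keep track of is that the Gerstenhaber product is a priori defined only on $I_*$, which is exactly why the finite-support condition appears in the hypothesis rather than allowing all of ${\rm{Hom}}_k(k[G], \, k)$.
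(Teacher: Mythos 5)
Your proposal is correct and follows exactly the route the paper intends: the paper states (just before the corollary) that elements of $I_0$ are finitely supported, and the corollary is simply the $p=q=0$ specialization of Lemma \ref{G-product}. Your additional verification that the finitely supported functionals are precisely $I_0$ (so that the Gerstenhaber product of Definition \ref{b*Gerstenhaber} applies) is a worthwhile, if routine, completion of the identification the paper leaves implicit.
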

The unit $u \in {\rm{Hom}}_k (k[G], \, k)$ for the Gerstenhaber
product on $I_q$, $q \geq 0$, is given by
$$  u(h) = \begin{cases} 1, \ \ \ h = e, \ \ h \in G, \\
                         0, \ \ \ h \neq e, \ \ h \in G.
\end{cases}  $$

\section{The Gerstenhaber Coproduct}

In this section we show that the Frobenius coproduct on a group ring
can be extended to a coproduct on the chain complex for Hochschild
homology that becomes essentially a ``Gerstenhaber coproduct.'' 
The results are most easily stated for a finite group
$G$, although formally can be extended to infinite groups by using
completed group rings, completed tensor products and 
coefficients
in a $p$-adic completion $\hat{\bq}_p$.  For $G$ finite, let
$$  T: k[G] \to k[G] \ot k[G]  $$
be the $k$-linear map with $T(g_0) = \sum_{h \in G} h \ot g_0h^{-1}$,
where $g_0 \in G$.  Then $T$ is often called the Frobenius coproduct.
We refrain from using $\Delta$ for the Gerstenhaber coproduct, since
$\Delta$ denotes the simplicial coproduct.  
Let $C_n = k[G]^{\ot (n+1)}$, $n \geq 0$, be the chain complex for
$HH_*(k[G]; \, k[G])$.  Then $C_* = k[N^{\rm{cy}}_*(G)]$, where
$N^{\rm{cy}}_*(G)$ denotes the cyclic bar construction on $G$ 
\cite[7.3.10]{cyclic-hom}.  

Let $C_* \ot C_*$ denote the tensor product of chain complexes, i.e.,
$$  (C_* \ot C_*)_m = \sum_{p=0}^m C_p \ot C_{m-p}, \ \ \ m \geq 0,  $$
with differential $b^{\rm{Tot}} = b \ot {\bf{1}} + (-1)^p {\bf{1}} \ot b$,
$$  b^{\rm{Tot}} : C_p \ot C_{m-p} \to (C_{p-1} \ot C_{m-p}) \oplus
(C_p \ot C_{m-p-1}).  $$
It is an interesting exercise to show that $T: k[G] \to k[G] \ot k[G]$
described above is the beginning of a chain map
$T: C_* \to C_* \ot C_*$.  
\begin{theorem}
The $k$-linear map
\begin{align*}
& T: C_* \to C_* \ot C_*, \\
& T: C_m \to \sum_{p=0}^m C_p \ot C_{m-p}, \\
& T(g_0, \, g_1, \, \ldots \, , \, g_m) =
\sum_{p=0}^m \sum_{h \in G} (h, \, g_1, \, \ldots \, , \, g_p) \ot 
(g_0 h^{-1}, \, g_{p+1}, \, \ldots \, , \, g_m)
\end{align*}
is a map of chain complexes, i.e., $T \circ b = b^{\rm{Tot}} \circ T$.
\end{theorem}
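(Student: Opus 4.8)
The plan is to establish $T\circ b=b^{\rm{Tot}}\circ T$ by expanding both composites on a generator $(g_0,g_1,\ldots,g_m)$ of $C_m$ and matching the resulting terms. When $G$ is finite there is a shortcut worth recording: by Lemma~\ref{G-product} the evaluation pairings satisfy $\langle\al\underset{G}{\cdot}\be,\,c\rangle=\langle\al\ot\be,\,T(c)\rangle$ for $\al\in W_p$, $\be\in W_q$ and $c\in C_{p+q}$, so $T$ is the $k$-linear adjoint of the Gerstenhaber product $m\colon W_*\ot W_*\to W_*$; the Leibniz rule $b^*(\al\underset{G}{\cdot}\be)=b^*(\al)\underset{G}{\cdot}\be+(-1)^p\al\underset{G}{\cdot}b^*(\be)$ proved above says $m$ is a chain map, and passing to adjoints — after checking that the adjoint of the Koszul differential on $W_*\ot W_*$ is exactly $b^{\rm{Tot}}$ — yields the theorem. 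For a general discrete group the sum defining $T$ is formally infinite, so I would instead carry out the bookkeeping by hand; it is self-contained, and the Koszul signs come out the same way.

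For the direct computation, first write $b^{\rm{Tot}}T(g_0,\ldots,g_m)$ as $\sum_{p=0}^m\sum_{h\in G}\big[b(h,g_1,\ldots,g_p)\ot(g_0h^{-1},g_{p+1},\ldots,g_m)+(-1)^p(h,g_1,\ldots,g_p)\ot b(g_0h^{-1},g_{p+1},\ldots,g_m)\big]$, and split each inner $b$ into its first-merge term (the $d_0$ face), its interior-merge terms, and its wrap-around term (the cyclic face). This gives six families of terms: three from $b$ acting on the left tensor factor and three from $b$ acting on the right. On the other side, $b(g_0,\ldots,g_m)$ is the sum of the face-$0$ term $(g_0g_1,g_2,\ldots,g_m)$, the interior faces $(g_0,\ldots,g_ig_{i+1},\ldots,g_m)$ for $1\le i\le m-1$, and the wrap face $(g_mg_0,g_1,\ldots,g_{m-1})$, and $T$ then cuts each of these at every position $p'$.

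Next I would match the families. The interior merges produced by $b$ on the right factor, at cut $p$, reproduce the face-$i$ terms of $Tb$ cut at $p'=p$ with $p\le i-1$; the interior merges produced by $b$ on the left factor, at cut $p$, reproduce the face-$i$ terms of $Tb$ cut at $p'=p-1\ge i$ — in particular the superficially delicate case ``$Tb$, face $i$, cut exactly at $i$'' is simply the last interior merge of $b$ applied to the left factor $(h,g_1,\ldots,g_{i-1},g_ig_{i+1})$. The left-factor first-merge terms reproduce the face-$0$ terms of $Tb$ after the reindexing $h\mapsto hg_1^{-1}$ of the sum over $G$, and the right-factor wrap terms reproduce the wrap face of $Tb$; in each case the internal face sign $(-1)^j$, together with the Koszul sign $(-1)^p$ that $b^{\rm{Tot}}$ attaches when $b$ hits the right factor, is exactly the $(-1)^i$ or $(-1)^m$ carried by the corresponding term of $Tb$, and the boundary cuts $p=0$ and $p=m$ need no separate treatment because $b$ vanishes on $C_0$. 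The two families that are left over — the left-factor wrap terms and the right-factor first-merge terms — correspond to nothing in $Tb$; they cancel against one another. Indeed the left-factor wrap at cut $p$ is $(-1)^p\sum_h(g_ph,g_1,\ldots,g_{p-1})\ot(g_0h^{-1},g_{p+1},\ldots,g_m)$, and the substitution $h\mapsto g_p^{-1}h$ rewrites it as $(-1)^p\sum_h(h,g_1,\ldots,g_{p-1})\ot(g_0h^{-1}g_p,g_{p+1},\ldots,g_m)$, which is $-1$ times the right-factor first-merge term at cut $p-1$; running $p$ from $1$ to $m$ annihilates all of them, leaving $Tb=b^{\rm{Tot}}T$.

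The main obstacle is organizational rather than conceptual: one must keep four nested indices straight — the cut position, which tensor factor the boundary hits, which internal face of that boundary is selected, and the group variable $h$ — while tracking Koszul signs throughout. The single genuinely non-obvious point, and the reason a naive term-by-term comparison of $Tb$ with $b^{\rm{Tot}}T$ does not close, is that two of the six families generated by $b^{\rm{Tot}}T$ do not match anything in $Tb$ but instead cancel each other, and only after the reindexing $h\mapsto g_p^{-1}h$ of the summation over $G$.
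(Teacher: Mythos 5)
Your proof is correct, but it takes a genuinely different route from the paper's. The paper proves the theorem precisely by the ``shortcut'' you set aside: it notes that $m(\al \ot \be) = \al \underset{G}{\cdot} \be$ is a cochain map $W_* \ot W_* \to W_*$, dualizes to obtain a chain map $m^* : {\rm{Hom}}_k(W_*, k) \to {\rm{Hom}}_k(W_* \ot W_*, k)$, evaluates $m^*$ on the dual basis elements $(g_0, \ldots, g_n)^{**}$ via the convolution formula of Lemma \ref{G-product}, and then identifies $m^*$ with $T$ under the double-dual isomorphism ${\rm{Hom}}_k(W_n, k) \simeq k[G]^{\ot (n+1)}$ available for $G$ finite. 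That argument is short, but it uses finiteness of $G$ essentially and leaves the identification of the dual of the Koszul differential with $b^{\rm{Tot}}$ implicit in the phrase ``under the double hom-dual functor'' --- exactly the point you flag as needing a check. Your direct expansion is longer but self-contained and makes every sign explicit; I verified the six-family matching, including the two reindexings ($h \mapsto h g_1^{-1}$ for the face-$0$ terms and $h \mapsto g_p^{-1} h$ for the leftover families), and the bookkeeping closes as you claim: the left-factor wrap terms at cut $p$ carry $(-1)^p$ and the right-factor first-merge terms at cut $p-1$ carry $(-1)^{p-1}$, so they cancel in pairs, while the remaining four families reproduce $T b$ term by term. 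The one thing your version buys that the paper's does not is independence from the double-dual identification, so it adapts directly to the completed-tensor-product setting for infinite $G$ that the paper only alludes to; what it costs is the conceptual explanation of \emph{why} $T$ is a chain map, namely that it is the adjoint of the multiplication.
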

\begin{proof}
Recall that 
$$  W_n := {\rm{Hom}}_k(k[G]^{\ot (n+1)}, \ k) = {\rm{Hom}}_k(C_n, \,
k).  $$
Since $m : W_* \ot W_* \to W_*$ given by $m ( \al \ot \be ) =
\al \underset{G}{\cdot} \be$ is a map of cochain complexes, there is a
map of chain complexes
$$  m^* : {\rm{Hom}}_k(W_*, \, k) \to 
{\rm{Hom}}_k( W_* \ot W_*, \, k).  $$
For $G$ finite, there is a natural vector space isomorphism
${\rm{Hom}}_k(W_n, \, k) \simeq k[G]^{\ot (n+1)}$ given by
$$  (g_0, \, g_1, \, \ldots \, , \, g_n)^{**} 
\longleftrightarrow  (g_0, \, g_1, \, \ldots \, , \, g_n).  $$
By construction of $m^*$, we have
\begin{align*}
& m^*( (g_0, \, g_1, \, \ldots \, , \, g_n)^{**}) (\al \ot \be) \\
& = (g_0, \, g_1, \, \ldots \, , \, g_n)^{**}( \al \underset{G}{\cdot}
\be) \\
& = (\al \underset{G}{\cdot} \be)(g_0, \, g_1, \, \ldots \, , \, g_n) \\
& = \sum_{h \in G} \al(h, \, g_1, \, \ldots \, , \, g_p) 
\be(g_0 h^{-1}, \, g_{p+1}, \, \, \ldots \, , \, g_{n}).
\end{align*}
Also,
\begin{align*}
&\Big( \sum_{h \in G} (h, \, g_1, \, \ldots \, , \, g_p)^{**}
\ot (g_0h^{-1}, \, g_{p+1}, \, \ldots \, , \, g_n)^{**} \Big)
(\al \ot \be) \\
& = \sum_{h \in G} (h, \, g_1, \, \ldots \, , \, g_p)^{**}( \al)
(g_0h^{-1}, \, g_{p+1}, \, \ldots \, , \, g_n)^{**}(\be) \\
& = \sum_{h \in G} \al (h, \, g_1, \, \ldots \, , \, g_p) 
\be (g_0h^{-1}, \, g_{p+1}, \, \ldots \, , \, g_n).  
\end{align*}
Thus,
$$   m^*( (g_0, \, g_1, \, \ldots \, , \, g_n)^{**}) =
\sum_{p=0}^m \sum_{h \in G} (h, \, g_1, \, \ldots \, , \, g_p)^{**} \ot
(g_0h^{-1}, \, g_{p+1}, \, \ldots \, , \, g_n)^{**}.  $$
Under the double hom-dual functor on vector spaces and linear maps,
the morphism $m^*$ corresponds to the morphism $T$ given in the statement of
the theorem.
\end{proof}

Thus, there is an induced map
$T_* : HH_*(k[G]; \, k[G]) \to H_*( C_* \ot C_* )$.  For $k$ a field,
we have
$$ T_* : HH_*(k[G]; \, k[G]) \to  HH_*(k[G]; \, k[G]) \ot
 HH_*(k[G]; \, k[G]).  $$
The counit (trace) $\tau : k[G]^{\ot n+1} \to k$ is given by $\tau
(\sigma) = 0$, $n \geq 1$, and for $n = 0$,
$$  \tau \big( \sum_{h \in G} c_h \, h \big) = c_e ,  $$
where $c_e$ is the coefficient on the identity element $e \in G$.
Then
$$  ( \tau \ot {\bf{1}}) ( T( \sigma)) = \sigma, \ \ \
\sigma \in k[G]^{\ot (p+1)}.  $$
As usual, for $\al \in {\rm{Hom}}_k (k[G]^{\ot (p+1)}, \, k)$
and $\be \in {\rm{Hom}}_k (k[G]^{\ot (q+1)}, \, k)$, 
$\al \ot \be \in {\rm{Hom}}( C_* \ot C_*, \, k)$ is defined by
$$  (\al \ot \be)(\sigma_1 \ot \sigma_2) = \al (\sigma_1) \be
(\sigma_2), $$
where $\al (\sigma_1) = 0$ if $\sigma_1 \in k[G]^{\ot (n +1)}$, $n
\neq p$, and $\be (\sigma_2) = 0$ if $\sigma_2 \in k[G]^{\ot (m+1)}$, $m
\neq q$.  Then
\begin{equation} \label{coproduct}
( \al \underset{G}{\cdot} \be ) ( \sigma) = (\al \ot \be)(T(\sigma)),
\ \ \ \sigma \in k[G]^{\ot (p+q+1)}.  
\end{equation}
In the sense of equation \eqref{coproduct}, $T$ is a coproduct for the
Gerstenhaber product.

\section{A Frobenius Algebra on Cochains}
In this section we work with a finite group $G$ and a field $k$.  We
claim that the Hochschild cochains $({\rm{Hom}}(k[G]^{\ot (*+1)}, \,
b^*)$ under the Gerstenhaber (convolution) product form a differential
graded homotopy commutative Frobenius algebra.  Let $W_n =
{\rm{Hom}}(k[G]^{\ot (n+1)}, \, k)$ and $W_* = \sum_{n \geq 0}W_n$.  
For $\al \in W_p$ and $\be \in
W_q$, recall from Lemma \eqref{G-product} that
\begin{align*}
& ( \al \underset{G}{\cdot} \be )(g_0, \, g_1, \, \ldots \, , \, g_p ,
\, g_{p+1}, \, \ldots \, , \, g_{p+q}) \\
& = \sum_{h \in G} \al(h, \, g_1, \, \ldots \, , \, g_p) 
\be(g_0 h^{-1}, \, g_{p+1}, \, \, \ldots \, , \, g_{p+q}).
\end{align*}
The chain homotopy between $\al \underset{G}{\cdot} \be$ and
$(-1)^{pq} \be \underset{G}{\cdot} \al$ is, of course, given by
Gerstenhaber's pre-Lie product \cite{Gerstenhaber}.  

For $\al \in W_p$ and $\be \in W_q$, the pairing $\langle \ , \
\rangle : W_* \ot W_* \to k$ is given by
\begin{align*}
 \langle \al, \, \be \rangle &  = \sum_{h, \, g_1, \, \ldots, \,
  g_{p+q} \in G} \al (h, \, g_1, \, \ldots \, , \, g_p) \be(h^{-1}, \,
g_{p+1}, \, \ldots \, , \, g_{p+q}) \\
& = \sum_{h \in G} \al(h, \, N, \, N, \, \ldots \, , \, N)
\be (h^{-1}, \, N, \, N, \, \ldots \, , \, N),
\end{align*}
where $N = \sum_{g \in G} g$ is the so-called norm element of $k[G]$.  
Clearly, the pairing is symmetric, i.e., $\langle \al, \, \be \rangle
= \langle \be, \, \al \rangle$.  

\begin{lemma}
For $\al \in W_p$, $\be \in W_q$ and $\gamma \in W_r$, we have
$$  \langle \al \underset{G}{\cdot} \be, \  \gamma \rangle =
\langle \al , \  \be \underset{G}{\cdot} \gamma \rangle. $$
\end{lemma}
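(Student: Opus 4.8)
The plan is a direct computation, unwinding both sides via the definition of the pairing and the product formula of Lemma \eqref{G-product}. First I would expand the left-hand side. Since $\al \underset{G}{\cdot} \be \in W_{p+q}$, the definition of the pairing gives
$$ \langle \al \underset{G}{\cdot} \be, \ \gamma \rangle = \sum_{h_0 \in G} (\al \underset{G}{\cdot} \be)(h_0, \, N, \, \ldots \, , \, N) \, \gamma(h_0^{-1}, \, N, \, \ldots \, , \, N), $$
with $p+q$ copies of $N$ in the first factor and $r$ copies in the second. By multilinearity of $\al \underset{G}{\cdot} \be$ in each tensor slot together with Lemma \eqref{G-product},
$$ (\al \underset{G}{\cdot} \be)(h_0, \, N, \, \ldots \, , \, N) = \sum_{h \in G} \al(h, \, N, \, \ldots \, , \, N) \, \be(h_0 h^{-1}, \, N, \, \ldots \, , \, N), $$
now with $p$ copies of $N$ in the arguments of $\al$ and $q$ in those of $\be$. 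Substituting and changing variables via $g := h_0 h^{-1}$ (so $h_0 = gh$, $h_0^{-1} = h^{-1} g^{-1}$, and $g$ ranges over $G$ as $h_0$ does, for each fixed $h$) yields
$$ \langle \al \underset{G}{\cdot} \be, \ \gamma \rangle = \sum_{g, \, h \in G} \al(h, \, N, \, \ldots \, , \, N) \, \be(g, \, N, \, \ldots \, , \, N) \, \gamma(h^{-1} g^{-1}, \, N, \, \ldots \, , \, N). $$

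Next I would carry out the analogous expansion of the right-hand side. Since $\be \underset{G}{\cdot} \gamma \in W_{q+r}$, the pairing gives
$$ \langle \al, \ \be \underset{G}{\cdot} \gamma \rangle = \sum_{h \in G} \al(h, \, N, \, \ldots \, , \, N) \, (\be \underset{G}{\cdot} \gamma)(h^{-1}, \, N, \, \ldots \, , \, N), $$
and Lemma \eqref{G-product}, again after using multilinearity in the $N$-slots, gives
$$ (\be \underset{G}{\cdot} \gamma)(h^{-1}, \, N, \, \ldots \, , \, N) = \sum_{g \in G} \be(g, \, N, \, \ldots \, , \, N) \, \gamma(h^{-1} g^{-1}, \, N, \, \ldots \, , \, N). $$
This produces exactly the double sum over $g, h \in G$ obtained above for the left-hand side, so the two are equal.

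The only inputs are the explicit product formula of Lemma \eqref{G-product}, $k$-linearity, and the finiteness of $G$ (so that $N \in k[G]$ makes sense and all the sums are finite). I do not expect a genuine obstacle here; the one point requiring care is the bookkeeping of exactly how many copies of $N$ appear in each argument after the Gerstenhaber product is expanded, together with the observation that the change of variable $h_0 \mapsto gh$ on the left reproduces precisely the internal summation index of the Gerstenhaber product appearing on the right. Conceptually, this identity is the cochain-level shadow of the coassociativity of the coproduct $T$ (cf.\ equation \eqref{coproduct}) after evaluation against norm elements, but the direct computation above is the most transparent route and avoids invoking any coassociativity statement that has not been established in the excerpt.
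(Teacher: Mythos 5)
Your proposal is correct and follows essentially the same route as the paper: both expand each side of the identity into a double sum over $G \times G$ of products of the form $\al(\cdot,N,\ldots,N)\,\be(\cdot,N,\ldots,N)\,\gamma(\cdot,N,\ldots,N)$ using the convolution formula of Lemma \ref{G-product}, and then match the two sums by the same change of variables (the paper sets $\mu=\ell$, $h=\lambda\mu$, which is your substitution $g=h_0h^{-1}$ in different notation). No gaps.
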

\begin{proof}
Note that 
\begin{align*}
& \langle \al \underset{G}{\cdot} \be, \  \gamma \rangle = 
\sum_{\mu, \, h  \in G} ABC, \ \ \ 
A = \al (\mu, \, N, \, \ldots \, , \, N),  \\
& B = \be (h \mu^{-1}, \, N, \, \ldots \, , \, N), \ \ \ 
C = \gamma (h^{-1}, \, N, \, \ldots \, , \, N).
\end{align*}
Also,
\begin{align*} 
&  \langle \al , \  \be \underset{G}{\cdot} \gamma \rangle =
\sum_{\lambda, \, \ell, \in G} DEF, \ \ \
D = \al(\ell , \, N, \, \ldots \, , \, N), \\
&  E = \be (\lambda, \, N, \, \ldots \, , \, N), \ \ \
F = \gamma (\ell^{-1} \lambda^{-1}, \, N, \, \ldots \, , \, N).
\end{align*} 
Setting $\mu = \ell$ and $h = \lambda \mu$, we see that
$$  \langle \al \underset{G}{\cdot} \be, \  \gamma \rangle =
\langle \al , \  \be \underset{G}{\cdot} \gamma \rangle.  $$
\end{proof}

\begin{lemma}
The pairing $\langle \ , \ \rangle : W_* \ot W_* \to k$ induces
a well-defined map 
$$  (HH^p(k[G]), \, b^*) \ot (HH^q(k[G]), \, b^*) \to k  $$
for (i) $p$ odd, $q$ odd, (ii) $p$ even, $q$ odd, (iii) $p$ odd,
$q$ even, and (iv) $p = 0$, $q = 0$.  
\end{lemma}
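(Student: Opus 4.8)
The plan is to reduce the statement to two vanishing facts. For each listed pair $(p,q)$ it suffices to show that whenever $\al\in W_p$ and $\be\in W_q$ are $b^*$-cocycles, one has $\langle b^*\al',\be\rangle=0$ and $\langle\al,b^*\be'\rangle=0$ for all $\al'\in W_{p-1}$ and $\be'\in W_{q-1}$. Granting this, the bilinear pairing $W_p\times W_q\to k$ annihilates $B^p\times Z^q$ and $Z^p\times B^q$ (writing $Z^n,B^n$ for the $b^*$-cocycles and coboundaries in $W_n$), hence factors through $HH^p\ot HH^q\to k$. Since $\langle\al,\be\rangle$ only ever evaluates its two arguments on strings of the form $(h,N,\ldots,N)$, with $N=\sum_{g\in G}g$ the norm element, everything rests on one computation. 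For $G$ finite, $hN=Nh=N$ and $N^2=|G|N$ in $k[G]$, so for $m\geq 1$, inside $k[G]^{\ot(m+1)}$ (with $m$ copies of $N$ on the left),
$$
b(h,N,\ldots,N)=\bigl(1+(-1)^m\bigr)(N,\ldots,N)+|G|\Bigl(\sum_{i=1}^{m-1}(-1)^i\Bigr)(h,N,\ldots,N);
$$
thus $b(h,N,\ldots,N)=0$ when $m$ is odd, and $b(h,N,\ldots,N)=2\,(N,\ldots,N)-|G|\,(h,N,\ldots,N)$ when $m$ is even. I also use the trivial identity $\sum_{h\in G}\varphi(h^{-1},N,\ldots,N)=\varphi(N,N,\ldots,N)$, valid for any linear $\varphi$ since $\sum_h h^{-1}=N$.

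Cases (iv) and (i) then follow quickly. In case (iv) the $b^*$-complex begins in degree $0$, so there are no coboundaries in $W_0$ and the pairing restricted to $\ker b^*\ot\ker b^*$ automatically descends. In case (i), with $p$ and $q$ both odd, $(b^*\al')(h,N,\ldots,N)=\al'\bigl(b(h,N,\ldots,N)\bigr)=\al'(0)=0$ because there are $p$ (odd) copies of $N$; hence $\langle b^*\al',\be\rangle=0$, and $\langle\al,b^*\be'\rangle=0$ symmetrically because $q$ is odd. (Here the cocycle hypothesis is not even used.) Case (iii), $p$ odd and $q$ even, will be deduced from case (ii) by the symmetry $\langle\al,\be\rangle=\langle\be,\al\rangle$ already noted.

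Case (ii), $p$ even and $q$ odd, is the substance. The $\be$-variation vanishes exactly as in case (i) since $q$ is odd. For the $\al$-variation (where $p\geq 2$), applying the even-$m$ formula at $m=p$ and then collapsing $\sum_h\be(h^{-1},N,\ldots,N)=\be(N,\ldots,N)$ gives
$$
\langle b^*\al',\be\rangle=2\,\al'(N,\ldots,N)\,\be(N,\ldots,N)-|G|\,\langle\al',\be\rangle.
$$
Separately, the cocycle relation $b^*\be=0$ applied to the string $(h^{-1},N,\ldots,N)$ with $q+1$ copies of $N$, combined with the even-$m$ formula at $m=q+1$, gives $|G|\,\be(h^{-1},N,\ldots,N)=2\,\be(N,\ldots,N)$ for every $h$; multiplying the definition of $\langle\al',\be\rangle$ through by $|G|$ and substituting yields $|G|\,\langle\al',\be\rangle=2\,\al'(N,\ldots,N)\,\be(N,\ldots,N)$, so the two terms cancel and $\langle b^*\al',\be\rangle=0$, as required.

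I expect the only real obstacle to be bookkeeping rather than conceptual difficulty: one must keep track of exactly how many copies of $N$ occupy each slot (a cochain in $W_m$ eats $m+1$ factors, a chain in $C_m$ likewise), apply the cocycle relation to a chain one dimension above the relevant cochain, and verify that the powers of $|G|$ balance on the two sides so that no hypothesis relating $\mathrm{char}\,k$ to $|G|$ is needed. It is worth noting why the remaining case — $p,q$ both even and positive — is genuinely excluded: there $b$ of the relevant strings involves an odd number of copies of $N$ and so vanishes, the cocycle condition then supplies no comparable relation, and the cancellation above fails.
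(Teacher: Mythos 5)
Your proposal is correct and follows essentially the same route as the paper: both reduce to computing $b(h,N,\ldots,N)$, which vanishes when the number of $N$'s is odd and equals $2(N,\ldots,N)-|G|(h,N,\ldots,N)$ when it is even, and both use the cocycle condition on $\be$ (applied one degree up) to produce exactly the cancellation you exhibit in case (ii). Your write-up is, if anything, slightly more explicit about the reduction to coboundaries and about why the even--even case fails, but the substance is identical.
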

\begin{proof}
Let $\al : k[G]^{\ot (p+1)} \to k$ and $\be : k[G]^{\ot (q+1)} \to k$
denote cocycles.  Let $\gamma : k[G]^{\ot p} \to k$ be an arbitrary
cochain.  In the group ring $k[G]$, $N^2  = {\nu} N$, where
$\nu$ is the order of $G$.  Now,
\begin{align*}
& \langle \al + b^*(\gamma), \, \be \rangle = \langle \al, \, \be
\rangle + \langle b^*(\gamma), \, \be \rangle, \\
& \langle b^*(\gamma), \, \be \rangle = \sum_{h \in G} b^*(\gamma) 
(h, \, N, \, \ldots \, , \, N) \be(h^{-1}, \, N, \, \ldots \, , \, N).
\end{align*}
For case (i),
\begin{align*}
b^*(\gamma)(h, \, N, \, \ldots \, , \, N)&  = \gamma(N, \, N, \, \ldots
\, , N) - \gamma(h, \,  N^2, \, \ldots \, , \, N) + \\
& \ldots + \gamma(h, \, N, \, \ldots \, , \, N^2) - \gamma(N, \, N, \,
\ldots \, , \, N) = 0.
\end{align*}
Thus, $\langle \al + b^*(\gamma), \, \be \rangle = \langle \al, \, \be
\rangle$.  Similarly in case (i), $\langle \al , \, \be + b^* (\theta)
\rangle = \langle \al , \, \be \rangle$, where $\theta : k[G]^{\ot q}
\to k$ is an arbitrary cochain.  

For case (ii),
$$  b^*(\gamma)(h, \, N, \, \ldots \, , \, N) = 
2 \gamma(N, \, N, \, \ldots \, ,\, N) - \nu \gamma(h,
\, N, \, \ldots \, \, N).  $$
Since $\be$ is a cocycle,
$$  0 = b^*(\be)(h, \, N, \, \ldots \, , \, N)  = 
2 \be (N, \, N, \, \ldots \, , \, N) - \nu \be (h, \, N, \, \ldots \,
, \, N).  $$
Thus,
\begin{align*}
\langle b^*(\gamma) , \, \be \rangle & = 
2 \gamma(N, \, N, \, \ldots \, , \, N) \be(N, \, N, \, \ldots \, , \,
N) \\
& - \sum_{h \in G} \nu \gamma(h, \, N, \, \ldots \, , \, N) 
\be (h^{-1}, \, N, \, \ldots \, , \, N) \\
& = 2 \gamma(N, \, N, \, \ldots \, , \, N) \be(N, \, N, \, \ldots \, , \,
N) \\
& -2 \sum_{h \in G} \gamma(h, \, N, \, \ldots \, , \, N) 
\be (N, \, N, \, \ldots \, , \, N) = 0.  
\end{align*}
Case (iii) follows by symmetry from case (ii).  Case (iv) is
well-known.  
\end{proof}

The pairing $\langle \ \, \rangle : W_* \ot W_* \to k$, however, is
not non-degenerate, since if $\al (h, \, N, \, \ldots \, , \, N) = 0$
for all $h \in G$, then $\langle \al , \, \be \rangle = 0$ for all
$\be \in W_*$.  In fact, it can be shown that given $\al \in W_p$ with
$\langle \al , \, \be \rangle = 0$ for all $\be$, then  
$\al (h, \, N, \, \ldots \, , \, N) = 0$ for all $h \in G$.  Let 
$$  V_p = \{ \al \in W_p \ | \  \al (h, \, N, \, \ldots \, , \, N) =
0 \ \forall h \in G \}.  $$
Then  $(V_*, \ b^*)$ is in fact a subcomplex of $(W_*, \ b^*)$.  With
the above restrictions, we still have:

\begin{corollary}
The cochain complex $(W_*, \ b^*)$ is a differential graded homotopy
commutative Frobenius algebra under the Gerstenhaber product and
pairing $\langle \ ,\ \rangle$ defined above.
\end{corollary}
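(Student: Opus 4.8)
The plan is to verify the defining axioms of a differential graded Frobenius algebra one at a time, since almost all of the pieces have already been assembled in the preceding lemmas; the corollary is essentially a bookkeeping statement that collects them. First I would record that $(W_*, b^*)$ is a cochain complex (immediate from the definition of $b^*$) and that the Gerstenhaber product $m(\al \ot \be) = \al \underset{G}{\cdot} \be$ is a cochain map $W_* \ot W_* \to W_*$ (this is the Leibniz rule $b^*(\al \underset{G}{\cdot} \be) = b^*(\al) \underset{G}{\cdot} \be + (-1)^p \al \underset{G}{\cdot} b^*(\be)$, which follows from Definition \eqref{b*Gerstenhaber} and the analogous statement for $\delta$ on ${\rm{Hom}}_k(k[G]^{\ot *}, k[G])$ together with the fact that $\Psi$, $\Phi$ are cochain maps, exactly as in the proof of the lemma following Definition \eqref{b*Gerstenhaber}). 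Associativity of $\underset{G}{\cdot}$ on the nose follows from the explicit formula in Lemma \eqref{Gerstenhaberproduct} (or Lemma \eqref{G-product}): bracketing $(\al \underset{G}{\cdot} \be) \underset{G}{\cdot} \gamma$ and $\al \underset{G}{\cdot} (\be \underset{G}{\cdot} \gamma)$ both produce $(\gamma_0 \be_0 \al_0, \al_1, \ldots, \al_p, \be_1, \ldots, \be_q, \gamma_1, \ldots, \gamma_r)^*$ on generators, hence agree by linearity. The unit is the cochain $u$ exhibited at the end of Section 2; one checks $u \underset{G}{\cdot} \al = \al = \al \underset{G}{\cdot} u$ directly from Lemma \eqref{G-product}, and $b^*(u) = 0$ so $u$ is a cocycle.

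Next I would assemble the homotopy-commutativity and the Frobenius compatibility. Homotopy commutativity is cited in Section 4: Gerstenhaber's pre-Lie product $\al \circ \be$ provides a cochain homotopy between $\al \underset{G}{\cdot} \be$ and $(-1)^{pq} \be \underset{G}{\cdot} \al$, and one simply transports it through $\Phi, \Psi$ exactly as the product itself was transported in Definition \eqref{b*Gerstenhaber}; I would state this and point to \cite{Gerstenhaber}. The associativity/Frobenius identity $\langle \al \underset{G}{\cdot} \be, \gamma \rangle = \langle \al, \be \underset{G}{\cdot} \gamma \rangle$ for the symmetric pairing $\langle\ ,\ \rangle$ is precisely the first lemma of Section 4. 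For the differential-graded compatibility of the pairing — i.e.\ that $\langle\ ,\ \rangle$ descends to a pairing on cohomology in the relevant parity cases — I would invoke the second lemma of Section 4. So the body of the proof is: "combine the lemma following Definition \eqref{b*Gerstenhaber}, Lemmas \eqref{Gerstenhaberproduct} and \eqref{G-product}, the two lemmas of the present section, and the homotopy-commutativity of the Gerstenhaber product."

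The one genuinely delicate point — and the place where I would spend actual effort rather than citation — is that $(W_*, \underset{G}{\cdot})$ is \emph{not} a Frobenius algebra in the strict sense, because $\langle\ ,\ \rangle$ is degenerate (its radical contains $V_*$, as noted just before the corollary) and because $W_*$ is infinite-dimensional, so the usual equivalence "associative pairing $\Leftrightarrow$ comodule-trace data" is not literally available. I would therefore make explicit which formulation of "differential graded homotopy commutative Frobenius algebra" is being asserted: a dg algebra $(W_*, b^*, \underset{G}{\cdot}, u)$, homotopy commutative via the pre-Lie homotopy, equipped with a symmetric degree-zero invariant pairing $\langle\ ,\ \rangle$ and a compatible coproduct — here the coproduct side is supplied by $T : C_* \to C_* \ot C_*$ of Section 3, dual to $m$, together with the counit $\tau$ satisfying $(\tau \ot {\bf 1})(T\sigma) = \sigma$ and the relation \eqref{coproduct} expressing $m$ as the dual of $T$. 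The main obstacle, then, is not any computation but pinning down the precise axiom list (in the spirit of \cite{Freed}) so that the word "Frobenius" is justified despite degeneracy and infinite dimension; once that definition is fixed, each axiom is discharged by one of the already-proved statements. I would close by remarking that the pairing becomes non-degenerate after quotienting by the subcomplex $V_*$ where it degenerates, so on $W_*/V_*$ one recovers a bona fide (homotopy commutative) Frobenius structure, tying back to the TQFT on $HH^0(k[G]; k[G])$ from \cite{Teleman}.
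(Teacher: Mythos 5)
Your proposal is correct and matches the paper's (implicit) argument: the paper offers no separate proof for this corollary, treating it as a summary of the preceding results — the Leibniz/cochain-map property of the product, the unit $u$, homotopy commutativity via the pre-Lie product, the invariance identity $\langle \al \underset{G}{\cdot} \be, \ \gamma\rangle = \langle \al, \ \be \underset{G}{\cdot} \gamma\rangle$, and the coproduct $T$ with counit $\tau$ — exactly the pieces you assemble. Your added care in pinning down which axioms survive the degeneracy of the pairing (and the observation that $V_*$ lies in its radical) is consistent with the paper's own caveat immediately preceding the corollary.
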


The product $m: W_* \ot W_* \to W_*$ is a generalized convolution
product and $T^*: W_* \to W_* \ot W_*$ is the convolution coproduct.
Consequently any chain map $W_*^{\ot n} \to W_*^{\ot m}$ that is a
composition of the maps $m$ and  $T^*$ has a simple
expression in terms of generalized convolution products and
coproducts.   

For $n = 0$, 1, 2, $\ldots \, ,$ let
$$ N^{\rm{cy}}_n(G, \, e) = \{ (g_0, \, g_1, \, \ldots \, , \, g_n)
\in G^{n+1} \ | \ g_0g_1 \ldots g_n =e \}.  $$
Then $N^{\rm{cy}}(G, \, e)$ is a subsimplicial set of $N^{\rm{cy}}(G)$
with geometric realization $| N^{\rm{cy}}_*(G, \, e)| \simeq BG$, the
classifying space of $G$.  We say that 
$$ \al \in W_n = 
{\rm{Hom}}_k (k[G]^{\ot (n+1)}, \ k)  $$ 
is supported on $BG$ if
$\al (g_0, \, g_1, \, \ldots \, , \, g_n) = 0$ for 
$g_0 g_1 \ldots g_n \neq e$.  A direct calculation of the $b^*$
coboundary map shows that cocycles supported on $BG$ form a subcomplex
of the Hochschild complex $(W_*, \ b^*)$.  In symbols, let
$$  W_n(e) = \{ \al \in W_n \ | \ \al (g_0, \, g_1, \, \ldots \, , \,
g_n) = 0 \ \ {\rm{if}}\ \ g_0 g_1 \ldots g_n \neq e \},  $$
and let $W_*(e) = \sum_{n \geq 0} W_n(e)$ be the graded cochain complex
with $b^*$ as coboundary.  Then with coefficients in an arbitrary
commutative coefficient ring $k$, we have
$$  H^* (W_*(e); \, k) \simeq H^*(BG; \, k).  $$

\begin{lemma}
Under the Gerstenhaber product $W_*(e)$ becomes a differential graded
subalgebra of the full Hochschild cochain complex $(W_*, \ b^*)$.
\end{lemma}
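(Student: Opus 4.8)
The plan is to verify directly, using the closed formula for the Gerstenhaber product recorded in Lemma \ref{G-product}, that the product of two cochains supported on $BG$ is again supported on $BG$. Since it has already been observed that $b^*$ preserves $W_*(e)$, so that $(W_*(e),\ b^*)$ is a subcomplex, and since the unit of the Gerstenhaber product lies in $W_0(e)$, establishing closure under $\underset{G}{\cdot}$ will be enough to conclude that $(W_*(e),\ b^*)$ is a differential graded subalgebra of $(W_*,\ b^*)$.

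So first I would fix $\al \in W_p(e)$ and $\be \in W_q(e)$, take an arbitrary tuple $(g_0, g_1, \ldots, g_{p+q}) \in G^{p+q+1}$ with $g_0 g_1 \cdots g_{p+q} \neq e$, and aim to show $(\al \underset{G}{\cdot} \be)(g_0, \ldots, g_{p+q}) = 0$. By Lemma \ref{G-product} this value equals $\sum_{h \in G} \al(h, g_1, \ldots, g_p)\, \be(g_0 h^{-1}, g_{p+1}, \ldots, g_{p+q})$, so it suffices to show that every summand vanishes.

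Next I would analyze when a summand can be nonzero. Because $\al$ is supported on $BG$, the factor $\al(h, g_1, \ldots, g_p)$ is nonzero only when $h g_1 \cdots g_p = e$, that is, $h^{-1} = g_1 \cdots g_p$. For such an $h$ the group-ring argument of $\be$ collapses: $(g_0 h^{-1}) g_{p+1} \cdots g_{p+q} = g_0 (g_1 \cdots g_p) g_{p+1} \cdots g_{p+q} = g_0 g_1 \cdots g_{p+q} \neq e$ by hypothesis. Since $\be$ is also supported on $BG$, the factor $\be(g_0 h^{-1}, g_{p+1}, \ldots, g_{p+q})$ then vanishes. Hence for every $h \in G$ at least one of the two factors is zero, so $(\al \underset{G}{\cdot} \be)(g_0, \ldots, g_{p+q}) = 0$, which proves $\al \underset{G}{\cdot} \be \in W_{p+q}(e)$.

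Finally I would note that the unit $u \in W_0$ of the Gerstenhaber product, defined by $u(h) = 1$ if $h = e$ and $u(h) = 0$ otherwise, satisfies the $BG$-support condition in degree $0$, so $u \in W_0(e)$; combined with the already established invariance of $W_*(e)$ under $b^*$, this completes the proof that $(W_*(e),\ b^*)$ is a differential graded subalgebra. There is no genuine obstacle here; the only point demanding a little care is the bookkeeping inside the product formula, namely the observation that the support condition forced by $\al$ is exactly what rewrites the argument of $\be$ as $g_0 g_1 \cdots g_{p+q}$, so that the two $BG$-support conditions combine into the single one needed for the product.
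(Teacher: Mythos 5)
Your proof is correct and follows essentially the same route as the paper: a direct verification that the support condition is preserved under the Gerstenhaber product, the paper invoking the basis-element formula of Lemma \ref{Gerstenhaberproduct} where you use the equivalent convolution formula of Lemma \ref{G-product}. Your computation that the constraint $h^{-1}=g_1\cdots g_p$ forced by $\al$ turns the argument of $\be$ into $g_0g_1\cdots g_{p+q}$ is exactly the ``direct calculation'' the paper leaves to the reader.
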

\begin{proof}
The proof follows from a direct calculation using, for example,  
Lemma \eqref{Gerstenhaberproduct}, 
$$  \al \underset{G}{\cdot} \be =  
( \be_0 \al_0, \ \al_1, \ \ldots \, , \ \al_p, \ \be_1, \ \ldots \, , \
\be_q)^* , $$
where
$$  \al = (\al_0, \, \al_1, \, \ldots \, , \, \al_p)^*, \ \ \ 
\be = (\be_0, \, \be_1, \, \ldots \, , \, \be_q)^*.  $$
The result is also implicit in \cite{Farinati} and \cite{Menichi}.  
\end{proof}

Since $N^{\rm{cy}}_*(G)$ is a simplicial set, the cochain complex
$W_*$ is endowed with a second product, the simplicial cup product,
$\al \underset{S}{\cdot} \be$.  In particular, for $\al \in W_p$ and
$\be \in W_q$, the simplicial product $\al \underset{S}{\cdot} \be 
\in W_{p+q}$ is given by
\begin{align*}
&  (\al \underset{S}{\cdot} \be) (g_0, \, g_1, \, \ldots \, , \, g_p,
\, g_{p+1}, \, \ldots \, , \, g_{p+q}) = \\
&  \al ( (g_{p+1}g_{p+2} \ldots g_{p+q}g_0), \ g_1, \ g_2, \ \ldots \,
, \ g_p) \be ( (g_0g_1 \ldots g_p), \ g_{p+1}, \ \ldots \, , \ g_{p+q}),
\end{align*}
which is $\al$ evaluated on the front $p$-face and $\be$ evaluated on
the back $q$-face of $\sigma = (g_0, \, \ldots \, , \, g_{p+q})$.  
When restricted to $W_*(e)$, the product $\al \underset{S}{\cdot} \be$
realizes the usual cup product on the cochain complex for group
cohomology, $H^*(BG; \, k)$.  

\begin{lemma}
On the differential graded subalgebra $W_*(e)$, the Gerstenhaber
product is identical to the simplicial cup product, i.e., for $\al \in
W_p$ and $\be \in W_q$, we have $\al \underset{G}{\cdot} \be =
\al \underset{S}{\cdot} \be$.
\end{lemma}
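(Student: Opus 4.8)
The plan is to evaluate both products on an arbitrary simplex $\sigma = (g_0, \, g_1, \, \ldots \, , \, g_{p+q})$ and to use the support condition defining $W_*(e)$ to collapse the summation in the Gerstenhaber formula. By Lemma \eqref{G-product},
$$ (\al \underset{G}{\cdot} \be)(g_0, \, \ldots \, , \, g_{p+q}) = \sum_{h \in G} \al(h, \, g_1, \, \ldots \, , \, g_p)\, \be(g_0 h^{-1}, \, g_{p+1}, \, \ldots \, , \, g_{p+q}), $$
whereas, by definition, the simplicial cup product is
$$ (\al \underset{S}{\cdot} \be)(g_0, \, \ldots \, , \, g_{p+q}) = \al(g_{p+1} \ldots g_{p+q} g_0, \, g_1, \, \ldots \, , \, g_p)\, \be(g_0 g_1 \ldots g_p, \, g_{p+1}, \, \ldots \, , \, g_{p+q}). $$

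The first step is to observe that, since $\al \in W_p(e)$, the value $\al(h, \, g_1, \, \ldots \, , \, g_p)$ vanishes unless $h\, g_1 \ldots g_p = e$, that is, unless $h = (g_1 \ldots g_p)^{-1}$; this holds irrespective of the remaining entries $g_i$. Consequently the sum in the Gerstenhaber formula has at most one nonzero term, the one with $h = (g_1 \ldots g_p)^{-1}$, and since then $g_0 h^{-1} = g_0 g_1 \ldots g_p$ we get
$$ (\al \underset{G}{\cdot} \be)(g_0, \, \ldots \, , \, g_{p+q}) = \al((g_1 \ldots g_p)^{-1}, \, g_1, \, \ldots \, , \, g_p)\, \be(g_0 g_1 \ldots g_p, \, g_{p+1}, \, \ldots \, , \, g_{p+q}). $$
The $\be$-factor here is literally the $\be$-factor appearing in the simplicial formula, so it remains only to reconcile the two $\al$-factors. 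If this common $\be$-factor is zero, both products vanish at $\sigma$ and there is nothing to prove; otherwise $\be \in W_q(e)$ forces $(g_0 g_1 \ldots g_p)(g_{p+1} \ldots g_{p+q}) = e$, i.e.\ $g_0 g_1 \ldots g_{p+q} = e$, and one cyclic rotation of this relation yields $g_{p+1} \ldots g_{p+q} g_0 = (g_1 \ldots g_p)^{-1}$. Substituting, the $\al$-factor of the simplicial product becomes the $\al$-factor of the Gerstenhaber product, so the two agree at $\sigma$; as $\sigma$ was arbitrary, $\al \underset{G}{\cdot} \be = \al \underset{S}{\cdot} \be$.

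I do not expect a genuine obstacle here: the content is entirely the bookkeeping of group elements. The one point deserving care is that the collapse of the sum over $h$ to a single term takes place \emph{before} any restriction is imposed on $\sigma$, which is exactly what lets the argument avoid a separate analysis of simplices outside $N^{\rm{cy}}_*(G, \, e)$ — on those, the shared $\be$-factor already vanishes because $\be$ is supported on $BG$. As a byproduct, the collapsed formula re-proves that $\al \underset{G}{\cdot} \be \in W_{p+q}(e)$, since its unique term carries the factor $\be(g_0 g_1 \ldots g_p, \, g_{p+1}, \, \ldots \, , \, g_{p+q})$, and likewise shows $W_*(e)$ is closed under $\underset{S}{\cdot}$, the two facts used implicitly above.
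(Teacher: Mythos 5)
Your proof is correct and follows the same route the paper intends: the paper's own proof simply cites the ``direct calculation'' from \cite{Lodder}, and your argument carries out exactly that calculation, collapsing the convolution sum to the single term $h=(g_1\cdots g_p)^{-1}$ via the support condition on $\al$ and then matching the $\al$-factors using cyclicity of $g_0g_1\cdots g_{p+q}=e$ when the common $\be$-factor is nonzero. The bookkeeping, including the treatment of simplices outside $N^{\rm{cy}}_*(G,\,e)$, checks out.
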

\begin{proof}
This follows from a direct calculation \cite{Lodder}. 
\end{proof} 
The above cochain equality realizes that $H^*(BG; \, k)$ is a
subalgebra of $HH^*(k[G]; \, k[G])$, which is proven in 
\cite{Farinati} and \cite{Menichi}.   
More is true, namely $W_*(e)$ under the Gerstenhaber product is isomporphic to
$W_*(e)$ under the simplicial product as differential graded {\em homotopy
commutative} algebras.  For $\al \in W_p$ and $\be \in W_q$, let 
$\al \circ \be \in W_{p+q-1}$ denote the pre-Lie product of
Gerstenhaber \cite{Gerstenhaber}, which is a cochain homotopy between
$\al \underset{G}{\cdot} \be$ and $(-1)^{pq} \be \underset{G}{\cdot}
\al$.  Also let $\al \underset{1, \, S}{\cdot} \be \in W_{p+q-1}$
denote Steenrod's cup-one product \cite{Steenrod}, which is a cochain
homotopy between $\al \underset{S}{\cdot} \be$ and $(-1)^{pq} \be
\underset {S}{\cdot} \al$.   
 
\begin{lemma}
For $\al \in W_p (e)$ and $\be \in W_q (e)$, we have
$\al \circ \be = \al \underset{1, \, S}{\cdot} \be$ as cochains.
\end{lemma}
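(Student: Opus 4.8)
The plan is to reduce the claimed cochain identity $\al\circ\be=\al\underset{1,\,S}{\cdot}\be$ for $\al\in W_p(e)$, $\be\in W_q(e)$ to a direct computation of both sides on a simplex $\sigma=(g_0,\,g_1,\,\ldots,\,g_{p+q-1})$ with $g_0g_1\ldots g_{p+q-1}=e$, and then match the two expressions term by term. First I would recall the two ingredients explicitly. Gerstenhaber's pre-Lie product $\al\circ\be$ is defined by inserting $\be$ into each of the $p$ argument slots of $\al$ with the appropriate sign, $\al\circ\be=\sum_{i=1}^{p}(-1)^{(q-1)(i-1)}\al\circ_i\be$, where in the $b^*$-model the ``insertion'' is computed through $\Psi_p$ and $\Psi_q$ as in Definition \eqref{b*Gerstenhaber}; concretely, on natural $k$-module generators $\al=(\al_0,\,\ldots,\,\al_p)^*$ and $\be=(\be_0,\,\ldots,\,\be_q)^*$ one gets, exactly as in Lemma \eqref{Gerstenhaberproduct} but with insertion into the $i$-th slot, an explicit generator of $W_{p+q-1}$ whose first entry is a product of $\al_0$, $\be_0$ and the $\al_j$'s lying between the insertion point and the front. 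Steenrod's cup-one product $\al\underset{1,\,S}{\cdot}\be$ on the simplicial set $N^{\rm cy}_*(G)$ is the sum over front-face/back-face decompositions dictated by the Alexander--Whitney-type diagonal with one ``overlap''; on $\sigma$ it evaluates $\al$ on a certain $(p)$-face and $\be$ on a certain $(q)$-face, with signs $(-1)^{(i-1)(q-1)}$ or similar.

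Next I would carry out the comparison in two steps. Step one: establish the identity on natural $k$-module generators, i.e. for $\al=(\al_0,\,\ldots,\,\al_p)^*\in W_p(e)$ and $\be=(\be_0,\,\ldots,\,\be_q)^*\in W_q(e)$, where the support condition reads $\al_0\al_1\ldots\al_p=e$ and $\be_0\be_1\ldots\be_q=e$. Here both $\al\circ_i\be$ and the $i$-th summand of $\al\underset{1,\,S}{\cdot}\be$ produce a single generator of $W_{p+q-1}$ (up to the support bookkeeping), and I would check that (a) the sequences of ``middle'' group entries $g_1,\,\ldots,\,g_{p+q-1}$ agree, (b) the first entries $g_0$ agree — this is where the hypotheses $\al\in W_p(e)$, $\be\in W_q(e)$ are used to rewrite the various products $\be_1\ldots\be_q$, $\al_{i+1}\ldots\al_p$ as inverses of complementary products and collapse them — and (c) the signs agree. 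Step one should be mechanical once the cup-one face formulas are written down in the cyclic-bar-construction convention matching Lemma \eqref{Gerstenhaberproduct}. Step two: pass from generators to arbitrary cochains by $k$-linearity, exactly as in the proof of Lemma \eqref{G-product}; the support/finiteness issues are trivial here since $G$ is finite in this section.

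The main obstacle is the sign and indexing reconciliation in step one(b)--(c): the pre-Lie product is defined through the detour $\Phi_{p+q-1}\circ(\Psi_p(\al)\circ_i\Psi_q(\be))$, so the group-inversions in $\Psi$ (cf. equation \eqref{Psi}) interact with the cyclic rotation built into the simplicial front/back faces of $N^{\rm cy}_*(G)$ (note the front face uses $g_{p+1}\ldots g_{p+q}g_0$, with a wrap-around), and getting the wrap-around term to cancel correctly is precisely where the condition $g_0g_1\ldots g_{p+q-1}=e$ must be invoked. I expect that once one fixes the convention for $\al\circ_i\be$ to be ``$\be$ inserted into the $i$-th slot of $\al$,'' the cyclic rotation in Steenrod's cup-one face is forced to match, and the signs $(-1)^{(q-1)(i-1)}$ line up on the nose; I would isolate this as a short computational lemma and otherwise cite \cite{Lodder} for the parallel calculation already used in the previous two lemmas, since the cup product case $\al\underset{G}{\cdot}\be=\al\underset{S}{\cdot}\be$ on $W_*(e)$ is the ``degree-$0$ homotopy'' shadow of the present statement.
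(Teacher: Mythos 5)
The paper's own ``proof'' is a one-line citation to Theorem 3.11 and Corollary 3.12 of \cite{Lodder}, so your computational sketch is, in outline, exactly the argument that those cited results encapsulate: reduce to generators $(\al_0,\ldots,\al_p)^*$, $(\be_0,\ldots,\be_q)^*$, write both $\al\circ_i\be$ (computed through $\Psi$ and $\Phi$ as in Definition \eqref{b*Gerstenhaber}) and the $i$-th summand of Steenrod's cup-one product explicitly, and use the support conditions $\al_0\cdots\al_p=e$, $\be_0\cdots\be_q=e$ together with $g_0g_1\cdots g_{p+q-1}=e$ to reconcile the wrap-around entry. You have correctly identified where the hypothesis $\al,\be\in W_*(e)$ enters, which is the only non-formal point of the lemma.

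That said, as a standalone proof your proposal has a genuine gap: the decisive step is never executed. You never write down the explicit generator produced by $\al\circ_i\be$ in the $b^*$-model (the analogue of Lemma \eqref{Gerstenhaberproduct} for the $i$-th insertion), never write down the cup-one face formula for $N^{\rm cy}_*(G)$ in the same convention, and never perform the term-by-term and sign-by-sign comparison; you instead ``expect'' the signs to line up and defer the matching to a ``short computational lemma'' that is not stated or proved. Since that comparison \emph{is} the content of the lemma --- and since sign conventions for $\cup_1$ on a cyclic bar construction versus Gerstenhaber's $(-1)^{(q-1)(i-1)}$ are exactly the kind of thing that can fail by a global sign or an index shift --- the proposal is a correct roadmap rather than a proof. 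To close it you must either carry out the computation for one insertion slot $i$ in full (both the group-theoretic identification of the $0$-th entries and the sign), or do what the paper does and cite \cite{Lodder} explicitly for that identity rather than for ``the parallel calculation.''
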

\begin{proof}
The proof follows from Theorem 3.11 and Corollary 3.12 of
\cite{Lodder}.
\end{proof}

We now show that the simplicial coproduct $\Delta : C_* \to C_* \ot
C_*$ agrees with the Gerstenhaber coproduct $T: C_* \to C_* \ot C_*$
when restricted to the subsimplicial complex $k[N^{\rm{cy}}_*(G, \,
e)]$ of $C_* = k[N^{\rm{cy}}_*(G)]$.  Recall that for 
$\sigma \in N^{\rm{cy}}_m(G) = G^{m+1}$,
$$  \Delta (\sigma) = \sum_{p=0}^m f_p(\sigma) \ot b_{m-p}(\sigma), $$
where $f_p(\sigma)$ denotes the front $p$-face of $\sigma$ and
$b_q(\sigma)$ denotes the back $q$-face of $\sigma$.  

\begin{lemma}
When restricted to $k[N^{\rm{cy}}_*(G, \, e)]$
$$  T: k[N^{\rm{cy}}_*(G, \, e)] \to k[N^{\rm{cy}}_*(G, \, e)] \ot
k[N^{\rm{cy}}_*(G, \, e)]  $$
agrees with
$$ \Delta : k[N^{\rm{cy}}_*(G, \, e)] \to C_* \ot C_* .  $$
\end{lemma}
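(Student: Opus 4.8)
\medskip\noindent\textbf{Sketch of the argument.} The plan is to compare the two maps summand by summand on a basis simplex of the subcomplex, after first pinning down the precise sense in which ``agrees'' is meant. Fix $\sigma = (g_0,\, g_1,\, \ldots,\, g_m) \in N^{\rm{cy}}_m(G,\,e)$, so that $g_0 g_1 \cdots g_m = e$. The one combinatorial fact I would use repeatedly is that this relation is equivalent to
$$ g_{p+1}\,g_{p+2}\cdots g_m\,g_0 = (g_1\,g_2\cdots g_p)^{-1}, \qquad 0 \le p \le m, $$
obtained by isolating $g_0 g_1 \cdots g_p = (g_{p+1}\cdots g_m)^{-1}$ and rearranging. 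First I would check that $\Delta$ already carries the subcomplex into itself: reading $f_p(\sigma) = (g_{p+1}\cdots g_m g_0,\, g_1,\, \ldots,\, g_p)$ and $b_{m-p}(\sigma) = (g_0 g_1 \cdots g_p,\, g_{p+1},\, \ldots,\, g_m)$ off the simplicial cup product formula and using the displayed relation, one finds that the products of both faces are $e$, so $\Delta(\sigma) \in k[N^{\rm{cy}}_*(G,\,e)] \ot k[N^{\rm{cy}}_*(G,\,e)]$. (This is consistent with $W_*(e)$ being a subalgebra under $\underset{S}{\cdot}$.)

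Next I would expand
$$ T(\sigma) = \sum_{p=0}^m \sum_{h \in G} (h,\, g_1,\, \ldots,\, g_p) \ot (g_0 h^{-1},\, g_{p+1},\, \ldots,\, g_m) $$
and isolate which summands lie in $k[N^{\rm{cy}}_*(G,\,e)]^{\ot 2}$. The left factor $(h,\, g_1,\, \ldots,\, g_p)$ belongs to $N^{\rm{cy}}_p(G,\,e)$ exactly when $h g_1 \cdots g_p = e$, i.e.\ $h = (g_1 \cdots g_p)^{-1}$; and for precisely that $h$ the right factor $(g_0 h^{-1},\, g_{p+1},\, \ldots,\, g_m)$ has product $g_0 (g_1 \cdots g_p) g_{p+1}\cdots g_m = g_0 g_1 \cdots g_m = e$, hence automatically lies in $N^{\rm{cy}}_{m-p}(G,\,e)$. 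So for each $p$ there is a single surviving summand,
$$ \big( (g_1 \cdots g_p)^{-1},\, g_1,\, \ldots,\, g_p \big) \ot \big( g_0 g_1 \cdots g_p,\, g_{p+1},\, \ldots,\, g_m \big), $$
which I would then match with the $p$-th term of $\Delta(\sigma)$: the right factors are identical, while the displayed relation rewrites $(g_{p+1}\cdots g_m g_0,\, g_1,\, \ldots,\, g_p)$ as $((g_1 \cdots g_p)^{-1},\, g_1,\, \ldots,\, g_p)$ and $g_0 h^{-1}$ as $g_0 g_1 \cdots g_p$, so the surviving part of $T(\sigma)$ equals $\Delta(\sigma)$ term for term.

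I expect the only genuinely delicate point to be conceptual rather than computational: $T(\sigma)$ also contains the summands with $h \neq (g_1 \cdots g_p)^{-1}$, and these land in the complementary subcomplex (their left factor already has product $\neq e$), so the assertion ``$T = \Delta$ on $k[N^{\rm{cy}}_*(G,\,e)]$'' must be read after composing $T$ with the projection $C_* \ot C_* \to k[N^{\rm{cy}}_*(G,\,e)]^{\ot 2}$; in passing one should note that this projection is a chain map, since $k[N^{\rm{cy}}_*(G) \setminus N^{\rm{cy}}_*(G,\,e)]$ is itself a subcomplex of $C_*$ (the only face altering the total product is the wrap-around face $d_m$, which conjugates it and so cannot send a non-identity product to $e$). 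Alternatively, one can deduce the statement by duality, since $T$ and $\Delta$ are transposes of the Gerstenhaber and simplicial products, both of which preserve $W_*(e)$ and agree there; I would present the direct computation above as the cleaner argument and mention the dual viewpoint only as a remark.
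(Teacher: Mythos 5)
Your proposal is correct and follows essentially the same route as the paper: a direct term-by-term comparison of $T(\sigma)$ and $\Delta(\sigma)$ on a basis simplex, using the identity $g_{p+1}g_{p+2}\cdots g_m g_0=(g_1\cdots g_p)^{-1}$ forced by $g_0g_1\cdots g_m=e$. The only difference is that you explicitly account for the summands of $T(\sigma)$ with $h\neq(g_1\cdots g_p)^{-1}$ and justify discarding them via the projection onto $k[N^{\rm{cy}}_*(G,\,e)]\ot k[N^{\rm{cy}}_*(G,\,e)]$ (checking that the complementary span of tuples with product $\neq e$ is a subcomplex), a point the paper's proof passes over silently by writing only the surviving terms of $T(\sigma)$; this is added precision, not a different method.
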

\begin{proof}
For $\sigma \in  N^{\rm{cy}}_m (G, \, e)$, $\sigma = (g_0, \, g_1, \,
\ldots \, , \, g_m)$, we have
\begin{align*}
& T (\sigma) = e \ot \sigma + \\
& \ \sum_{p=1}^m \big( (g_1 g_2 \ldots g_p)^{-1}, \, g_1, \, \ldots \,
, \, g_p) \ot (g_0 g_1 \ldots g_p, \, g_{p+1}, \, \ldots \, , \, g_m).
\end{align*}
Also,
\begin{align*}
& \Delta (\sigma) = e \ot \sigma + \\
& \ \sum_{p=1}^m (g_{p+1} g_{p+2} \ldots g_m g_0, \, g_1, \, g_2, \,
\ldots \, , \, g_p) \ot (g_0g_1 \ldots g_p, \, g_{p+1}, \, \ldots \, ,
\, g_m).
\end{align*}
Since $g_0g_1 \ldots g_m = e$, it follows that $(g_1 g_2 \ldots
g_p)^{-1} = g_{p+1}g_{p+2} \ldots g_m g_0$.
\end{proof}

\end{document}